\DeclareMathOperator{\Rea}{Re}
\DeclareMathOperator{\Ima}{Im}
\renewcommand{\leq}{\leqslant}
\renewcommand{\geq}{\geqslant}
\theoremstyle{plain}
\newtheorem{theo}{Theorem }
\newtheorem{prop}{Proposition}[section]
\newtheorem{coro}[prop]{Corollary}
\newtheorem*{assum}{Assumption}
\theoremstyle{definition}
\newtheorem{rem}[prop]{Remark}
 \def\pasdegrille{\let\grille =
\pasgrille}  \def\aat#1#2#3{ \divide
\dimen1 by 48 \dimen3=\dimen1 \multiply \dimen1 by #1 \advance \dimen1
by -\dimen3 \divide \dimen1 by 101 \multiply \dimen1 by 100 \divide
\dimen2 by \count11 \multiply \dimen2 by #2
\setbox0=\hbox{#3}\ht0=0pt\dp0=0pt \rlap{\kern\dimen1 \vbox
to0pt{\kern-\dimen2\box0\vss}}\dimen1= \wd1 \dimen2=\ht1}
\def\pasgrille{ \count12= \dimen1 \divide \count12 by 50 \divide
\dimen2 by \count12 \count11 =\dimen2 \ \divide \dimen1 by 48
\setlength{\unitlength}{\dimen1} \smash{\rlap{\ }} \dimen1= \wd1
\dimen2=\ht1 } \def\grille{ \count12= \dimen1 \divide \count12 by 50
\divide \dimen2 by \count12 \count11 =\dimen2 \ \divide \dimen1 by 48
\setlength{\unitlength}{\dimen1} \smash{\rlap{\graphpaper[1](0,0)(50,
\count11)}} \dimen1= \wd1 \dimen2=\ht1 }
\numberwithin{equation}{section}
\begin{document}
\title[Decay for Schr\"odinger]{ Decay of local energy for solutions of the free Schr\"odinger equation in exterior domains}

\author {N. Burq}
\address{Universit\'e Paris-Sud, Math\'ematiques, Bat 425, 91405 ORSAY CEDEX, FRANCE}
\email{Nicolas.burq@math.u-psud.fr}
\author{B. Ducomet}
\address{Universit\'e Paris-Est, LAMA (UMR 8050), UPEMLV, UPEC, CNRS, 61 Avenue du G\'en\'eral de Gaulle, 94010 CRETEIL CEDEX 10, FRANCE}
\email{bernard.ducomet@u-pec.fr}
\begin{abstract}
In this article, we study the decay of the solutions of Schr\"odinger equations in the exterior of an obstacle. 
The main situations we are interested in are the general case (no non-trapping assumptions) or some weakly trapping situations
\end{abstract}
\begin{altabstract}
On s'intéresse dans cet article à la d\'ecroissance temporelle de la solution du probl\`eme ext\'erieur pour l'\'equation de Schr\"odinger,
dans le cas d'obstacles. Les situations auquelles on s'intéresse sont le cas général (pas d'hypothèse géométrique) ou alors des situations faiblement captantes.
\end{altabstract}
\thanks{N. Burq is partially supported by ANR projects ANR-13-BS01-0010-03, (ANA\'E) and ANR-16-CE40-0013 (ISDEEC)}
%\thanks{B. Ducomet is partially supported by ANR (project~ANR-10-BLAN 0101 NoNAP)}
\maketitle
\section{Introduction}

Since the early works of Lax and Phillips \cite{LP,Mel} on exterior Dirichlet problem for the wave operator $\partial_t^2-\Delta$, it is well known
that geometry of the boundary plays a crucial role in the time decay properties of the solution whether it is trapping or not.
If the boundary is not trapping, it is known \cite{M} that local energy decays at exponential rate.
For the same problem, under very general trapping assumptions,
N. Burq  proved \cite{B} that there exists an exponentially small neighbourhood of the energy real axis, free of resonances, which implies a logarithmic 
decay rate of local energy.
\vskip0.25cm
For the Schr\"odinger operator $\imath\partial_t+\Delta$ which enjoys different spectral properties the only available result to our knowledge, concerning the exterior non-trapping
case, is due to
 Tsutsumi \cite{T1}(see also Hayashi \cite{H} for star shaped geometries and  Rauch \cite{R} for the potential case), and reads as follows.

Consider a smooth compact obstacle $\Theta \subset \mathbb{R}^d$ such that $\Omega = \mathbb{R}^d \setminus \Theta$ is connected.
\begin{equation}
\left\{  \begin{array}{ll}
{\displaystyle \bigl(\imath\partial_t+\Delta\bigr)u=0\ \ \ \mbox{in}\ {\mathbb R}_+\times \Omega},\\\\
{\displaystyle u(x,t)=0\ \ \ \mbox{on}\ {\mathbb R}_+\times \partial\Omega},\\\\
{\displaystyle u\mid_{t=0}=u_0 \in L^2 ( \Omega)\ \ \ \mbox{in}\ \Omega,}
 \end{array}
\right.
\label{schrod}
\end{equation}

 Denoting $L^2_R(\Omega):=\{u\in L^2(\Omega),\ \mbox{Supp}\ u\subset \Omega\cap B(0,R)\}$, one get~\cite{T1}
\begin{theo}[Tsutsumi]
\label{t0}
Let $d\geq 3$ and $\Omega$ a non-trapping exterior domain  (i.e. such that any singularity of the Green function for the  Dirichlet problem for the wave equation in $\Omega$ goes to infinity
as $t\rightarrow \infty$). Let $U(t)$ be the evolution operator associated to problem (\ref{schrod}), and $R_1,R_2>0$. There exists $C>0$ such that for all $t>1$
\begin{equation}
\left\| U(t) \right\|_{L^2_{R_1}(\Omega),L^2_{R_2}(\Omega)}
\leq \frac{C}{ t^{\frac{d}{2}}}.
\label{e0}
\end{equation}
\end{theo}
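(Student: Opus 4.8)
The plan is to pass to the spectral representation of $U(t)=e^{\imath t\Delta_D}$, where $\Delta_D$ denotes the Dirichlet Laplacian on $\Omega$, and to reduce the local energy estimate to precise information on the cut-off resolvent. Fix $\chi\in C_c^\infty(\mathbb{R}^d)$ with $\chi\equiv 1$ on $B(0,\max(R_1,R_2))$, so that $\left\|U(t)\right\|_{L^2_{R_1}(\Omega),L^2_{R_2}(\Omega)}\leq \left\|\chi U(t)\chi\right\|_{L^2(\Omega)\to L^2(\Omega)}$, and write, via Stone's formula, $\chi U(t)\chi=\frac{1}{2\pi\imath}\int_0^{+\infty}e^{-\imath t\lambda}\,\chi\bigl(R(\lambda+\imath 0)-R(\lambda-\imath 0)\bigr)\chi\,d\lambda$, where $R(z)=(-\Delta_D-z)^{-1}$. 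Choosing $\varphi\in C_c^\infty([0,\varepsilon_0))$ equal to $1$ near $0$, I split this integral as $I_{\mathrm{low}}(t)+I_{\mathrm{high}}(t)$ according to $1=\varphi(\lambda)+(1-\varphi(\lambda))$: the decay rate $t^{-d/2}$ will come entirely from $I_{\mathrm{low}}$, while the non-trapping hypothesis will make $I_{\mathrm{high}}$ negligible.

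For the high-frequency term I would invoke the resolvent information furnished by the non-trapping hypothesis: by Lax--Phillips, Morawetz, Melrose--Sj\"ostrand and Vainberg, the hypothesis that the singularities of the Dirichlet wave Green function escape to infinity entails that $\chi R(k^2)\chi$, holomorphic for $\Ima k>0$, extends holomorphically across the real axis to a resonance-free region below it with polynomial bounds, and, on the continuous spectrum, $\left\|\partial_\lambda^N\bigl(\chi R(\lambda\pm\imath 0)\chi\bigr)\right\|_{L^2\to L^2}\leq C_N\lambda^{-\frac12-N}$ for $\lambda\geq\varepsilon_0/2$ and every $N$. Writing $I_{\mathrm{high}}$ in the variable $k=\sqrt\lambda$ and deforming the contour into the lower half-plane (or, equivalently, integrating by parts $N$ times in $\lambda$, with no boundary terms since $1-\varphi$ vanishes near $0$ and the density decays at infinity), one gains a factor $e^{-ct}$ (resp.\ $t^{-N}$), whence $\left\|I_{\mathrm{high}}(t)\right\|_{L^2\to L^2}\leq C_N t^{-N}$ for every $N$; in particular it is $O(t^{-d})$, far better than required.

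The core of the matter is $I_{\mathrm{low}}(t)$, for which one needs the behaviour of $\chi R(z)\chi$ near $z=0$. Here I would construct a parametrix by gluing the free resolvent $R_0(z)$ on $\mathbb{R}^d$, used away from the obstacle, to the resolvent $R_{\mathrm{int}}(z)$ of the Dirichlet Laplacian on a large truncation $\Omega\cap B(0,\rho)$, used near the obstacle: with suitably nested cut-offs one sets $\mathcal{E}(z)=(1-\chi_0)R_0(z)(1-\chi_1)+\chi_2 R_{\mathrm{int}}(z)\chi_1$, so that $(-\Delta_D-z)\mathcal{E}(z)=\mathrm{Id}+\mathcal{K}(z)$ with $\mathcal{K}(z)$ compact and, since $d\geq 3$, of the same harmless $\sqrt z$/logarithmic structure as $R_0(z)$ in a neighbourhood of $z=0$, holomorphic in $R_{\mathrm{int}}(z)$ below its first interior Dirichlet eigenvalue. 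By analytic Fredholm theory $(\mathrm{Id}+\mathcal{K}(z))^{-1}$ is meromorphic there, and the decisive point --- true precisely because $\Omega$ is an \emph{exterior} domain with $d\geq 3$ and a Dirichlet condition, so that $-\Delta_D$ has neither a zero eigenvalue nor a zero resonance --- is that $\mathrm{Id}+\mathcal{K}(0)$ is invertible. Hence $R(z)=\mathcal{E}(z)(\mathrm{Id}+\mathcal{K}(z))^{-1}$ extends continuously up to $z=0$, and the cut-off spectral density $e(\lambda):=\frac{1}{2\pi\imath}\chi\bigl(R(\lambda+\imath 0)-R(\lambda-\imath 0)\bigr)\chi$ inherits the structure of the free one: on $[0,\varepsilon_0]$ one has $e(\lambda)=\lambda^{\frac{d-2}{2}}A(\lambda)$ with $A$ smooth and $\mathcal{L}(L^2)$-valued when $d$ is odd, and $e(\lambda)$ is a smooth $\mathcal{L}(L^2)$-valued function vanishing to order $\tfrac{d-2}{2}$ at $\lambda=0$ when $d$ is even.

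It then remains to feed this structure into the oscillatory integral. For $d$ odd, $I_{\mathrm{low}}(t)=\int_0^{+\infty}e^{-\imath t\lambda}\varphi(\lambda)\lambda^{\frac{d-2}{2}}A(\lambda)\,d\lambda$, and the elementary asymptotics $\int_0^{+\infty}e^{-\imath t\lambda}\lambda^{\alpha}a(\lambda)\,d\lambda=O(t^{-\alpha-1})$, valid for non-integer $\alpha>-1$ and $a\in C_c^\infty$, give $\left\|I_{\mathrm{low}}(t)\right\|_{L^2\to L^2}\leq Ct^{-\frac{d}{2}}$; for $d$ even one integrates by parts $\tfrac{d-2}{2}+1$ times, the first $\tfrac{d-2}{2}$ boundary contributions at $\lambda=0$ vanishing by the order of vanishing of $e(\lambda)$, reaching the same bound. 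Combining this with the negligible $I_{\mathrm{high}}$, and using $\left\|U(t)\right\|=1$ for $0<t\leq1$, one obtains $\left\|U(t)\right\|_{L^2_{R_1}(\Omega),L^2_{R_2}(\Omega)}\leq Ct^{-d/2}$ for all $t>1$. I expect the main obstacle to be the low-frequency step: building the gluing parametrix and, above all, excluding a zero-energy eigenvalue or resonance so that $\mathrm{Id}+\mathcal{K}(0)$ is invertible and the expansion $e(\lambda)=\lambda^{(d-2)/2}A(\lambda)$ holds with the correct leading power --- the high-frequency estimate being classical non-trapping theory and the extraction of the rate a routine computation.
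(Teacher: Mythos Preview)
The paper does not give its own proof of Theorem~\ref{t0}: it is quoted as Tsutsumi's result from~\cite{T1}. What the paper \emph{does} contain is a detailed proof of the key low-frequency ingredient (Proposition~\ref{pv}), explicitly attributed to Tsutsumi's Lemma~2.3, and then uses it via contour deformation in the proof of Theorem~\ref{t4}. So the fair comparison is between your outline and Proposition~\ref{pv} together with Section~5.2.1.

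On that comparison, your strategy is essentially the same as the paper's. Your gluing parametrix $\mathcal{E}(z)=(1-\chi_0)R_0(z)(1-\chi_1)+\chi_2 R_{\mathrm{int}}(z)\chi_1$ is exactly the operator $G(z)$ of~\eqref{pv1}; your Fredholm step ``$\mathrm{Id}+\mathcal{K}(0)$ invertible because there is no zero eigenvalue/resonance'' is the paper's argument that $G(0)$ is one-to-one (carried out via the maximum principle). Where the two diverge is in how decay is extracted: you stay on the real axis and use oscillatory-integral asymptotics/integration by parts in $\lambda$, whereas the paper deforms into the complex plane along $\Gamma_1\cup\Gamma_2\cup\Gamma_3$ and reads off the rate from $\int_0^a e^{-tr\sin\theta}r^{(d-2)/2}\,dr$.

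One point where the paper's contour approach buys something concrete: in even dimensions the cut-off resolvent carries terms of the form $\xi^{(d-2)/2}\log\xi$ (see~\eqref{e8}), and the paper handles the resulting $\log$ explicitly via~\eqref{5.5quar}--\eqref{5.5six}. Your claim that, for $d$ even, the spectral density $e(\lambda)$ is a \emph{smooth} function vanishing to order $(d-2)/2$ glosses over this; you would need to argue that the logarithms present in $R(\lambda\pm i0)$ cancel in the jump (true for the free problem, but requiring a line of justification after passing through $(\mathrm{Id}+\mathcal{K}(z))^{-1}$), or else switch to the contour argument. Apart from this even-dimensional detail, your proposal is correct and matches the paper's approach.
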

 
 In this article our purpose is to  generalize the result of Tsutsumi in the general trapping case by adapting the strategy  of \cite{B} to a generalized Schr\"odinger equation, with a Laplace-Beltrami
 operator 
 $$ \frac{1}{\varrho(x)}\ \mbox{div} A(x)\nabla,$$
  replacing the Laplacian $\Delta$, and the results  of Ikawa \cite{Ik1, Ik2} in the case where $\overline{\mathcal O}$ is the union 
of several disjoint convex obstacles. 

An extra motivation for this work is the relevance of
this kind of information for global existence of solutions to related non-linear Schr\"odinger's equations
 (see \cite{T2} \cite{T3} \cite{BGT}).
\vskip0.5cm 
 {\bf Acknowledgements. } We would like to thank C. Zuily for various discussions on the content of this article and a referee for signaling several imprecisions in a first version of the paper and raising comments which lead to significant improvements (see~\eqref{estibis} in Theorem~\ref{t4}).

\section{Hypotheses and results}

Let us consider a compact set $\Theta\subset {\mathbb R}^d$ with $C^{\infty}$ boundary such that $\Theta\subset B(0,a)$ for a $a>0$.
We denote by $P$ the operator defined by
\begin{equation}
{\displaystyle P(x,\partial):=-\frac{1}{\varrho(x)}\ \sum_{i,j=1}^n \partial_i a_{ij}(x)\partial_j},
\label{op}
\end{equation} 
in $\Omega:=\Theta^c$
 associated to the weight $\varrho(x)\in C^{\infty}(\overline{\Omega})$ such that $\varrho(x)\geq \varrho_0>0$
and the symmetric and positive definite matrix $A$ with entries
$A:=(a_{ij})\in C^{\infty}(\overline{\Omega})$ satisfying the ellipticity condition
\begin{equation}
{\displaystyle \sum_{i,j=1}^n a_{i,j}(x)\xi_i\xi_j\geq C|\xi|^2,\ \ \ \forall(x,\xi)\in T^*\Omega,\ \ C>0}.
\label{ellip}
\end{equation}
We suppose that (``asymptotic flatness") for any multi-index $\alpha$ such that $|\alpha|\leq 1$, there exist $\beta>0$ and $M_{\alpha}>0$ 
such that $\forall x\in {\mathbb R}^d\backslash B(0,a)$
\begin{equation}
{\displaystyle \left| \partial^{\alpha}_x\bigl(\varrho(x)-1\bigr)\right|+\sum_{i,j=1}^n \left|\partial^{\alpha}_x \bigl(a_{i,j}(x)-\delta_{i,j}\bigr)\right|
\leq \frac{M_{\alpha}}{ \langle x\rangle^{\beta+|\alpha|}},}
\label{bornes}
\end{equation}
where we denote by $ \langle \cdot\rangle$ the usual "japanese bracket"
\begin{equation}
 \langle x\rangle= \bigl(1+|x|^2\bigr)^{1/2}.
 \end{equation}
In the following we will also denote by $P$ the self-adjoint extension of (\ref{op}) in ${\mathcal H}=L^2(\Omega,\varrho(x)\ dx)$
with domain $D(P)=\{u\in H^2(\Omega),\ \ \left.  u\right|_{\partial\Omega}=0\}$. We shall denote by $\mathcal{H}^k = D( P^{k/2})$ endowed with the norm $\|u\|_{\mathcal{H}^k}^2 = \| P^{k/2} u\|_{L^2}+ \| u\|_{L^2}^2$, and by $\mathcal{H}^{-k}$ the image of $L^2$ by the map 
$$ u \mapsto (P+ \text{ Id} ) u, $$ endowed with the norm 
$$ \| u\|_{\mathcal{H}^{-k}} = \| (P+ \text{ Id} )^{-1} u \|_{L^2}.$$

If we denote by $\widetilde{R}(z)$ the outgoing resolvent $(z^2-P)^{-1}$ of $P$ in $\Omega$, $\bigl(z-P\bigr)^{-1}u_0$ is the solution of the problem
\begin{equation}
\left\{  \begin{aligned}&\bigl(z^2-P\bigr)v=u_0\ \ \text{ in }  \Omega,\\
&v \mid_{\partial \Omega}=0,\\
& \left(\frac{x}{|x|}\cdot \nabla_x - i  \sqrt{z} \right)v = o_{ |x| \rightarrow + \infty }( |x| ^{(1-n)/2}), \text{ ( $v$  is outgoing) }
 \end{aligned}
\right.
\label{stat}
\end{equation}
 and we have
\[
\widetilde{R}(z)f=i\int_0^{\infty} e^{-it\sqrt{z}}u(t)\ dt,
\]
where $u$ is the solution de (\ref{schrod}) and the square root is defined on $\mathbb{C}\setminus \mathbb{R}^+$ by 
$$ \sqrt{ \rho e^{i \theta} }= \sqrt{ \rho} e^{i \theta/2}, \rho >0, \theta \in ( - 2 \pi, 0).$$

It is known \cite{Mel} that $z\rightarrow \widetilde{R}(z)$
extends from $\Ima\ z<0$ (the physical half plane) to a region ${\mathcal O}$ (if $d$ is odd, ${\mathcal O}$ is the complex plane ${\mathbb C}$, 
and if $d$ is even, ${\mathcal O}$ is the simply connected covering of ${\mathbb C}^*$) 
as a meromorphic operator from $L^2_{comp}(\Omega)$ into $H^2_{loc}(\Omega)$. Then the resolvent $R(z) = \mathcal{R}( \sqrt{z}) $
 extends meromorphically from $\mathbb{C}^*$ to its universal cover (even dimensions) and to the two leaves cover (odd dimensions). 

Let us recall the general result~\cite{B, B2}
\begin{theo}[Burq]
\label{p2}
Assume that the operator $P$ satisfies the assumption (AE) in Remark~\ref{rem.ert}.

There exist $a, D>0$ such that, for any $\chi \in C^\infty_0( \mathbb{R}^d)$, in the conditions of Theorem \ref{t1}, the operator $\chi R(z)\chi $ 
has an analytic continuation, as a continuous operator in ${\mathcal H}$, 
in the region 
\begin{equation}
\Omega=\left\{ z \in \mathbb{C}; \Ima z \leq  e^{-a \sqrt{|\Rea z|}}:|z|\geq D\right\}.
\label{e5}
\end{equation}
Moreover, in this region, there exist $A>0$ and $C>0$ such that 
\begin{equation}
\left\|\chi R(z)\chi \right\|_{{\mathcal L}({\mathcal H})}
\leq
C e^{A\sqrt{|\Rea z|}}.
\label{e6}
\end{equation}
\end{theo}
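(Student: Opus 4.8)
The plan is to reduce Theorem~\ref{p2} to the high-frequency resolvent estimate for the \emph{semiclassical} operator obtained by rescaling, following the strategy of~\cite{B}. Writing $z$ with $\Rea z = \lambda^2$ large and $h = 1/\lambda$, the operator $z^2 - P$ becomes, after conjugation, a semiclassical problem $h^2 P - (1 + i \mathcal{O}(h e^{-a/\sqrt h}))$, and the claim~\eqref{e6} is equivalent to a bound $\| \chi (h^2 P - w)^{-1} \chi \| \leq e^{C/h}$ for $\Ima w$ in an exponentially small strip. The first step is therefore to set up this semiclassical reduction carefully, keeping track of how the real-axis resolvent, which a priori has poles (resonances), is controlled once one moves slightly into the non-physical region.

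Second, I would obtain the a priori bound away from the obstacle by a global Carleman/weighted estimate exploiting the asymptotic flatness hypothesis~\eqref{bornes}: outside $B(0,a)$ the operator is a short-range perturbation of $-\Delta$, so a Mourre-type or Carleman argument gives a resolvent bound there with an at most polynomial (indeed $\mathcal{O}(1/h)$) loss, together with the outgoing radiation condition controlling the behaviour at infinity. This localizes the problem to a neighbourhood of $\Theta$.

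Third — and this is the heart of the matter — I would invoke the propagation-of-singularities / geometric control machinery near the trapped set. The key tool, exactly as in~\cite{B}, is a local smoothing or resolvent estimate near the boundary which, \emph{without any non-trapping hypothesis}, still yields a bound with only an \emph{exponential} loss $e^{C/h}$; this is proved by iterating a semiclassical propagation estimate along generalized bicharacteristics (using Melrose--Sj\"ostrand propagation up to the glancing set) over a time interval of length $\sim 1/h \cdot \log(1/h)$-many reflections, or alternatively by a complex-scaling argument producing a resolvent meromorphic in $\mathcal{O}$ with the resonance-free region~\eqref{e5}. The main obstacle is precisely this step: handling the possibly trapped rays and the glancing set for the variable-coefficient operator $P$ with a Dirichlet boundary, and showing that the cumulative loss over the required number of reflections stays exponential rather than worse. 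One must check that the arguments of~\cite{B,B2}, originally written for $-\Delta$, go through for the Laplace--Beltrami operator $-\tfrac{1}{\varrho}\operatorname{div} A\nabla$ under~\eqref{ellip}--\eqref{bornes}; this is essentially a matter of verifying that the symbol calculus and the boundary propagation estimates only use the ellipticity and the smoothness of the coefficients, which they do.

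Finally, I would patch the interior estimate, the near-boundary estimate, and the exterior (flat) estimate together with a partition of unity subordinate to $\chi$, absorbing the commutator errors (which are supported in the overlap regions and gain a power of $h$) into the left-hand side for $|z| \geq D$ with $D$ large, and translate the resulting semiclassical bound back to the statement~\eqref{e6}, with $a$ coming from the width of the resonance-free strip and $A$ from the $e^{C/h}$ loss. The continuity of $\chi R(z) \chi$ as an operator on $\mathcal{H}$, rather than just between weighted spaces, follows since the cutoffs $\chi$ are compactly supported and the estimate is uniform on the region~\eqref{e5}.
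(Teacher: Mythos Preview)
The paper does not prove Theorem~\ref{p2}: it is quoted as a known result from~\cite{B,B2} (note the sentence ``Let us recall the general result~\cite{B, B2}'' immediately preceding the statement). There is therefore no proof in the paper to compare your proposal against.

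That said, your sketch is only partially faithful to the actual arguments in~\cite{B,B2}. The central mechanism there is \emph{not} propagation of singularities along generalized bicharacteristics with an accumulated exponential loss over many reflections, as you describe in your third step. The whole point of~\cite{B} is that one obtains the exponential resolvent bound with \emph{no} geometric information on the trapped set, and this is achieved via interpolation/Carleman estimates of Lebeau--Robbiano type (see~\cite{LR,LR2}), which are elliptic in nature and insensitive to the bicharacteristic flow. Your second step correctly mentions Carleman estimates, but you then switch in step three to a dynamical picture (Melrose--Sj\"ostrand propagation, counting reflections, glancing set) that plays no role in the proof and would, as you yourself flag, be the problematic step in the presence of trapping. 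The assumption (AE) of Remark~\ref{rem.ert} enters through analytic dilation/complex scaling at infinity, which furnishes the meromorphic continuation of the cut-off resolvent; the Carleman estimate is then what excludes resonances from the region~\eqref{e5} and yields the bound~\eqref{e6}. So while your overall architecture (semiclassical rescaling, interior estimate, exterior estimate, patching) is reasonable, the engine you propose for the interior step is the wrong one.
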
 
From these  general estimates, following Vodev, we deduce  resolvent estimates with space weights (see~\cite[Corollary 1.2]{Vo2}) 
\begin{theo}[Vodev]
\label{t1}
There exists positive constants $C_1,C_2$ and $\epsilon_0$ such that $R(z)$, which is holomorphic in $\Ima \xi <0$, does not have any pole in the region
\begin{equation}
{\mathcal V}:=\left\{z\in {\mathbb C}\ :\ |\Ima  z|<\epsilon_0 e^{-C_2\sqrt{|\Rea z|}}\right\}\cap \{|z|>C_1\},
\label{e1}
\end{equation}
where $(1+i0)^{1/2}>0$.

Moreover for any $s>1/2$ and for any $z\in {\mathcal V}$ the resolvent $R$ satisfies the bound
\begin{equation}
{\displaystyle \| e^{-\langle x\rangle} R(z)e^{-\langle x\rangle}\|_{{\mathcal L}({\mathcal H},{\mathcal H})}
\leq C \ e^{C_3\sqrt{|\Rea z|}}.}
\label{e2}
\end{equation} 
\end{theo}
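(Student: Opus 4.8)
The plan is to deduce Theorem~\ref{t1} from the analytic continuation statement of Theorem~\ref{p2} by trading the compactly supported cutoffs $\chi$ for the exponential weights $e^{-\langle x\rangle}$, following Vodev's argument. The first step is to fix a cutoff $\chi\in C^\infty_0(\mathbb{R}^d)$ with $\chi\equiv 1$ on a large ball $B(0,R_0)$ containing the obstacle $\Theta$ and the region where $A$ and $\varrho$ differ appreciably from the Euclidean data; on the complement of this ball the operator $P$ is a short-range perturbation of $-\Delta$, so one has good a priori control. I would then write, for $z$ in the region $\mathcal{V}$, a parametrix decomposition $R(z) = \chi R(z)\chi + (1-\chi) G_0(z)(1-\tilde\chi) + (\text{correction terms})$, where $G_0(z)$ is the free outgoing resolvent (or the resolvent of a reference operator which is globally a small perturbation of the Laplacian) and $\tilde\chi$ is a slightly larger cutoff. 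The free resolvent, weighted by $e^{-\langle x\rangle}$ on both sides, is bounded by $C e^{C\sqrt{|\Rea z|}}$ in such a logarithmic neighbourhood of the real axis — this is the classical Vodev/semiclassical estimate for $-\Delta$ with exponential weights — so the only genuinely nonlocal piece is handled.

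The second step is the resolvent-identity bookkeeping. Setting $v = R(z)e^{-\langle x\rangle}f$, one localizes $v$ into $\chi v$ and $(1-\chi)v$; the function $(1-\chi)v$ solves $(z^2 - P_0)((1-\chi)v) = -[P,\chi]v + (1-\chi)e^{-\langle x\rangle}f$ with $P_0$ a global nontrapping (indeed near-Euclidean) operator, and the commutator $[P,\chi]$ is supported in the annulus where $\chi' \neq 0$, i.e. in the region controlled by $\chi R(z)\chi$. Applying $e^{-\langle x\rangle}$ and the free weighted bound gives
\begin{equation}
\| e^{-\langle x\rangle}(1-\chi) v\|_{\mathcal H} \leq C e^{C\sqrt{|\Rea z|}}\bigl(\| \chi_1 v\|_{\mathcal H} + \|e^{-\langle x\rangle}f\|_{\mathcal H}\bigr),
\label{e.vodevplan}
\end{equation}
where $\chi_1$ is a cutoff equal to $1$ on the support of $\nabla\chi$; then $\| \chi_1 v\|_{\mathcal H} = \|\chi_1 R(z) e^{-\langle x\rangle} f\|_{\mathcal H}$ is estimated by $\|\chi_1 R(z)\chi_2\|_{\mathcal{L}(\mathcal H)}\| \chi_2 e^{-\langle x\rangle} f\|_{\mathcal H}$ for a cutoff $\chi_2 \geq \mathbf{1}_{\mathrm{supp}\,\chi_1}$, and the first factor is bounded by $C e^{A\sqrt{|\Rea z|}}$ by Theorem~\ref{p2}, the second by $\|f\|_{\mathcal H}$. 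Combining with the trivial bound $\|\chi v\|_{\mathcal H} \le \|\chi_1 v\|_{\mathcal H}$ closes the estimate~\eqref{e2} with $C_3$ equal to $A$ plus the constant from the free resolvent. The pole-free region follows because each ingredient (the free weighted resolvent, $\chi R(z)\chi$) is holomorphic in $\mathcal V \subset \Omega$, so $R(z)$ weighted by $e^{-\langle x\rangle}$ is holomorphic there; absence of poles of $R(z)$ as a map between the weighted spaces is then immediate, possibly after shrinking $\epsilon_0$ and enlarging $C_1,C_2$ to fit $\mathcal V$ inside the region~\eqref{e5}.

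The main obstacle I anticipate is the uniformity of the free (reference) weighted resolvent estimate $\|e^{-\langle x\rangle} G_0(z) e^{-\langle x\rangle}\| \le C e^{C\sqrt{|\Rea z|}}$ throughout the exponentially thin strip $\mathcal V$, rather than merely on the real axis: on the real axis $\|e^{-\langle x\rangle}G_0(\lambda)e^{-\langle x\rangle}\|$ is $O(1/|\Rea z|^{1/2})$ in fact, but one must allow $\Ima z$ to become positive (past the real axis, onto the unphysical sheet), and controlling the exponential growth of the free resolvent's continuation there — and checking it is dominated by $e^{C\sqrt{|\Rea z|}}$ precisely when $|\Ima z| < \epsilon_0 e^{-C_2\sqrt{|\Rea z|}}$ — requires the quantitative complex deformation / Phragmén–Lindelöf argument that is the technical heart of Vodev's corollary. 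A secondary subtlety is keeping track of the domains: the weighted spaces $\mathcal H^{-k}$ and the mapping properties of $[P,\chi]$ must be handled so that everything lands back in $\mathcal H$ with the stated operator norms; this is routine but must be done with the correct powers of $(P+\mathrm{Id})$. Since Theorem~\ref{t1} is quoted directly from~\cite[Corollary 1.2]{Vo2}, I would in practice cite that reference for these deformation estimates and only spell out the localization/resolvent-identity reduction above.
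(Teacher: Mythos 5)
Your reduction breaks down at the step where you estimate $\|\chi_1 v\|_{\mathcal H}=\|\chi_1 R(z)e^{-\langle x\rangle}f\|_{\mathcal H}$ by $\|\chi_1 R(z)\chi_2\|_{\mathcal{L}(\mathcal H)}\,\|\chi_2 e^{-\langle x\rangle}f\|_{\mathcal H}$: since $e^{-\langle x\rangle}f$ is \emph{not} compactly supported, one cannot insert a cutoff $\chi_2$ on the input side of $R(z)$ — one has $R(z)\chi_2 e^{-\langle x\rangle}f\ne R(z)e^{-\langle x\rangle}f$ for any fixed compactly supported $\chi_2$. The dropped piece $\chi_1 R(z)(1-\chi_2)e^{-\langle x\rangle}f$ is exactly a ``far-to-near'' contribution, i.e.\ the nonlocal quantity you set out to estimate, so the argument is circular and does not reduce to the black-box Theorem~\ref{p2}. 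The obstacle you flagged as the ``main one'' (uniformity of the free weighted bound throughout the exponentially thin strip) is real but secondary; the circularity above is the genuine gap.

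The paper's proof of Theorem~\ref{t1}, following Vodev's Corollary 1.2, circumvents this by working with cutoffs at $z$-dependent scales $\chi_{z^{q/2}}$ rather than fixed ones, and by exploiting the asymptotic flatness~\eqref{bornes} as a smallness parameter. Concretely, in Lemma~\ref{lembis} the operator $P$ is compared to $P^{a}$ (exactly Euclidean outside $B(0,a)$), and the resolvent identity produces an error term with a prefactor $O(\langle a\rangle^{-s})$; taking $a$ a suitable power of $z$ makes this coefficient small enough that the error is absorbed into the left side — this is the absorption mechanism that your fixed-cutoff parametrix decomposition lacks. Lemma~\ref{lem} then converts the $\langle x\rangle^{-s}$-weighted estimate for $R(z)$ into a $z^{q/2}$-scale cutoff estimate for $R_{a}(z)$, and Proposition~\ref{pcarl}, via the Dirichlet-to-Neumann operator and the Carleman-type bounds (\ref{E1})--(\ref{E3}) of Vodev, supplies the bound $e^{Cz^{1/2}}$ on that cutoff resolvent. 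Note also that the paper does not actually invoke Theorem~\ref{p2} as a black box; it re-derives a compactly-cut estimate (for the modified operator $R_a$, at $z$-dependent scales) from the Carleman/Lebeau--Robbiano machinery, so the logical dependency is different from what the phrasing ``from these general estimates, following Vodev\ldots'' may suggest.
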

This result will imply as in \cite{B}, a logarithmic decay for the solution of the problem 
(\ref{schrod}).

 More precisely our first result reads 
\begin{theo}
\label{t2}
For all $R_1,R_2>0$ and any $k>0$, there exists $C>0$ such that for any initial condition $u_0\in D(P^k)$ supported in
$B(0,R_1)\cap \Omega$, one has for any $t>1$
\begin{equation}
\left( \int_{B(0,R_1)\cap \Omega} |u(x,t)|^2 dx\right)^{1/2}
\leq \frac{C}{(\log t)^{2k}}\ \|u_0\|_{\mathcal{H}^{k}}.
\label{e3}
\end{equation}
\end{theo}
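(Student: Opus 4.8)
The plan is to deduce the logarithmic decay estimate \eqref{e3} from the resolvent bound \eqref{e2} of Theorem~\ref{t1} via a contour-deformation argument, exactly in the spirit of \cite{B}. First I would write the solution $u(t) = e^{-itP}u_0$ by means of the spectral theorem, or equivalently recover it from the resolvent through a Cauchy-type integral
\[
\chi u(t) = \frac{1}{2\pi i} \int_{\Gamma} e^{-itz} \chi R(z)\chi\, u_0\, dz,
\]
where $\chi \in C_0^\infty(\mathbb{R}^d)$ equals $1$ on $B(0,R_1)$ and $\Gamma$ is a contour just below the real axis (the physical region where $R(z)$ is holomorphic). Since $u_0 \in D(P^k)$ is compactly supported, $\chi R(z)\chi u_0$ decays like $\langle z\rangle^{-k}$ times $\|u_0\|_{\mathcal{H}^k}$ on the real axis (using $R(z) = (z-P)^{-1}$ and $P^k u_0 \in L^2$), which gives the convergence of the integral and the $\|u_0\|_{\mathcal{H}^k}$ weight in the final bound.

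Next I would deform $\Gamma$ into the pole-free region $\mathcal{V}$ of Theorem~\ref{t1}: for $|\Rea z| \leq (\log t)^2$, say, push the contour down to $\Ima z = -\epsilon_0 e^{-C_2 \sqrt{|\Rea z|}}$, and for $|\Rea z| > (\log t)^2$ keep a contour close to the real axis but exploit the polynomial decay $\langle z\rangle^{-k}$ coming from $u_0 \in D(P^k)$ together with finitely many integrations by parts in $t$ (each bringing a power of $z^{-1}$). On the deformed part, the gain is the exponential factor $e^{-t|\Ima z|} = e^{-t\epsilon_0 e^{-C_2\sqrt{|\Rea z|}}}$, which has to be balanced against the exponential loss $e^{C_3\sqrt{|\Rea z|}}$ from \eqref{e2}. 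Optimizing, on the range $|\Rea z|\lesssim (\log t)^2$ one finds $|\Ima z|\gtrsim \epsilon_0 e^{-C_2\log t} = \epsilon_0 t^{-C_2}$-type smallness is \emph{not} what is used; rather the standard computation shows that choosing the cutoff at $|\Rea z| \simeq (\varepsilon \log t)^2$ makes $t e^{-C_2\sqrt{|\Rea z|}} \simeq t^{1-C_2\varepsilon}$ large while keeping $e^{C_3\sqrt{|\Rea z|}} \simeq t^{C_3 \varepsilon}$ controlled, so that the contribution of the near-real-axis piece is bounded by a negative power of $t$, and the remaining high-frequency tail $|\Rea z| \geq (\varepsilon\log t)^2$ is controlled by $\int_{(\varepsilon\log t)^2}^\infty \langle\lambda\rangle^{-k}\,d\lambda \lesssim (\log t)^{2(1-k)} \cdot (\log t)^{-2}$-type bounds after the integrations by parts; combining the two and tracking the exponents carefully yields the rate $(\log t)^{-2k}$.

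The main obstacle, and the step requiring the most care, is the quantitative bookkeeping of exponents in this optimization: one must choose the number of integrations by parts in $t$, the frequency cutoff $\simeq (\log t)^2$, and the shape of the deformed contour so that \textbf{all} pieces — the deformed low-frequency part, the vertical connecting segments, and the high-frequency polynomial tail — are simultaneously bounded by $C(\log t)^{-2k}\|u_0\|_{\mathcal{H}^k}$, and in particular so that the factor $2k$ (rather than $k$) in the exponent emerges correctly from the square root in the exponential in \eqref{e2} (the substitution $\sqrt{|\Rea z|} \leftrightarrow \log t$ is what converts $k$ derivatives into $(\log t)^{-2k}$). A secondary technical point is justifying the contour deformation itself: one needs enough decay of $\chi R(z)\chi$ as $|\Ima z|$ stays bounded and $|\Rea z|\to\infty$ to close the contour at infinity, which again uses the compact support of $u_0$ and elliptic regularity to trade the $e^{A\sqrt{|\Rea z|}}$ loss of Theorem~\ref{p2} against powers of $P^k u_0$. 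Once these estimates are assembled, the compactly-supported cutoff $\chi$ restricts the left-hand side to $B(0,R_1)\cap\Omega$ and the proof is complete.
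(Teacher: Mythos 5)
Your high-level strategy (contour deformation into the pole-free region of Theorem~\ref{t1}, balancing the gain $e^{-t\,\epsilon_0 e^{-C_2\sqrt{|\Rea z|}}}$ against the loss $e^{C_3\sqrt{|\Rea z|}}$, cutting at $|\Rea z|\simeq(\log t)^2$) is the right skeleton, but the step you yourself flag as ``requiring the most care'' — the high-frequency tail — is where the argument breaks. Your proposed estimate
$\int_{(\log t)^2}^\infty \langle\lambda\rangle^{-k}\,d\lambda \lesssim (\log t)^{2(1-k)}$
loses a factor $(\log t)^{2}$ compared to the claimed rate $(\log t)^{-2k}$, and the ``finitely many integrations by parts'' you invoke to recover it is not substantiated: integrating by parts in the spectral parameter differentiates the cut-off resolvent and reintroduces the exponential loss from Theorem~\ref{t1}, so it does not give back a factor $(\log t)^{-2}$ for free. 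The paper explicitly observes (start of Section~\ref{se.3.3}) that the direct contour-deformation argument, of the kind you propose, only yields $(\log t)^{-k}$; the upgrade to $(\log t)^{-2k}$ is precisely the point.

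The mechanism you are missing is the combination of (i) a time cut-off $\psi(t)$ that turns the Cauchy problem into a non-homogeneous one (and avoids a $\delta_{t=0}$ singularity), (ii) a Gaussian convolution in the spectral variable which performs the low/high frequency split at $|\xi|\simeq(\log t)^2$ \emph{smoothly} while remaining compatible with contour deformation (via a quasi-analytic extension), and (iii) a Plancherel argument in the time variable on the high-frequency piece. Step (iii) is decisive: rather than integrating $(1+|\xi|)^{-k}$ over $|\xi|\gtrsim(\log t)^2$, one bounds the $L^2_t$-norm of the high-frequency source $K(u)$ via Plancherel, so that only the $L^\infty_\xi$ bound $\sup_{|\xi|>(\log t)^2}(1+|\xi|)^{-k}\lesssim(\log t)^{-2k}$ enters (see~\eqref{5.9}--\eqref{5.11} and~\eqref{5.18}). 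This is exactly what gives the exponent $2k$ without losing the extra $(\log t)^2$ from the $d\xi$-integration. A secondary, more minor gap: your starting Cauchy formula $\chi u(t)=\frac{1}{2\pi i}\int_\Gamma e^{-itz}\chi R(z)\chi u_0\,dz$ is divergent as written — the paper inserts the convergence factor $(1-i\xi)^{-k}$ by first rewriting $u_0=(P+i)^{-k/2}v_0$ (and reducing Theorem~\ref{t2} to the operator bound of Theorem~\ref{t3}), which is also where the $\|u_0\|_{\mathcal{H}^k}$ weight enters cleanly; and the holomorphy in $\xi$ of the $s$-dependent family $\chi R(\xi)e^{-isP}\chi$ needed to justify the deformation requires the explicit verification given in~\eqref{eq853}.
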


Our second result concerns some weakly trapping situations where resolvent estimates stronger than~\eqref{e2} are available.
 For the wave equation, such situations were investigated by  Ikawa in a series of articles 
(see \cite{Ik1},  \cite{Ik2},  \cite{Ik3}, \cite{Ik4}),  G\'erard \cite{Ge} and more recently by Petkov-Stoyanov \cite{PS}.

 We now assume that the operator $P$ satisfies
\begin{itemize}
\item (E) The metric in Section~2 is exactly euclidean outside of a fixed ball. 
%\item(AE) The metric in Section~2 satisfies near infinity the dilation analytic asymptotic flatness of the previous page 
\item{($H_N$)} The weighted resolvent  $\chi(x) R(z)\chi(x)$ which is holomorphic in $\Ima z <0$ admits a holomorphic extension in a region 
\begin{equation}
\label{domaine}
\{z\in{\mathbb C}; |\Rea z| \geq C, \Ima z \leq c|z|^{-N}<C\},
\end{equation}
 and satisfies for some $s>\frac 1 2$ and some compactly supported function with large enough support, $\chi$,
\begin{equation}
{\displaystyle \| \chi(x) R(z) \chi(x)\|_{{\mathcal L}({\mathcal H},{\mathcal H})}
\leq C |z|^N.}
\label{e2bis}
\end{equation} 
\end{itemize}
\begin{rem}
In the litterature, motivated by wave equation expansion, authors usually study the resolvent 
$$\widetilde{R}(z) = R(z^2)= ( -\Delta - z^2)^{-1} .$$
A straightforward calculation shows that stating Assumption $H_N$ in this context amounts simply to replace the domain~\eqref{domaine} by 
$$ \{z\in{\mathbb C}; |\Rea z| \geq C, \Ima z \leq c|z|^{-N-\frac 1 2}\},$$ and the estimate~\eqref{e2bis} by 
$$
{\displaystyle \| \chi(x) R(z)\chi(x)\|_{{\mathcal L}({\mathcal H},{\mathcal H})}
\leq C |z|^{2N}.}
$$
As a consequence, $H_{- \frac 1 2}$ is actually equivalent to a non trapping assumption while for larger $N$, it corresponds to a weakly trapping one. 
\end{rem}

\begin{theo}
\label{t4}
Assume that  (E) and ($H_N$) % or (AE) and ($H_N$) 
are satisfied.

 Then 
\begin{itemize}
\item If $0<N$, for any real $k>0$,   there exists  $C_k>0$ 
such that  for any  $t\geq 1$,we have 
\begin{equation}
\left\| \langle x\rangle^{-d/2- \epsilon}\ \frac{e^{-itP}}{(1-iP)^k}\ \langle x\rangle^{-d/2-\epsilon}\right\|_{{\mathcal L}({\mathcal H})}
\leq C_k
\max  \Bigl(\frac 1{t^{\frac d 2 }} , \Bigl(\frac{\log(t)}{t} \Bigr) ^{\frac k N}\Bigr) \label{esti}
\end{equation}
\item If $-\frac 1 2 \leq N<0$, for any real $0\leq k< d/2- (1+2N)$,  and any $\epsilon >0$, there exists  $C_{k, \epsilon}>0$ 
such that  for any  $t\geq 1$,we have 
\begin{equation}
\label{estibis}
\left\| \langle x\rangle^{-d/2}\ e^{-itP}\  (1-iP)^k \langle x\rangle^{-d/2}\right\|_{{\mathcal L}({\mathcal H})}
\leq C_k
 \frac 1{t^{\frac d 2 - \epsilon }}.
\end{equation}
\end{itemize}
\end{theo}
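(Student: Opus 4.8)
The plan is to derive the time-decay estimates~\eqref{esti} and~\eqref{estibis} from the resolvent bounds of Theorem~\ref{t1} and assumption $(H_N)$ by writing the propagator as an inverse Fourier--Laplace transform of the weighted resolvent and then deforming the contour of integration. Concretely, for $u_0$ localized in $B(0,R_1)\cap\Omega$ one writes, using the functional calculus and the relation between $\widetilde R$ and $R$ recalled in the excerpt,
\begin{equation*}
\langle x\rangle^{-d/2-\epsilon} e^{-itP}\varphi(P)\langle x\rangle^{-d/2-\epsilon} u_0
= \frac{1}{2i\pi}\int_{\Gamma} e^{-it\lambda}\,\varphi(\lambda)\,
\langle x\rangle^{-d/2-\epsilon}\bigl(R(\lambda-i0)-R(\lambda+i0)\bigr)\langle x\rangle^{-d/2-\epsilon}u_0\,d\lambda,
\end{equation*}
where $\varphi$ encodes the factor $(1-i\lambda)^{-k}$ and $\Gamma$ is initially $\mathbb R$. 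I would split the integral into a low-frequency part $|\lambda|\le C_1$ and a high-frequency part $|\lambda|\ge C_1$. On the low-frequency part one uses the standard behaviour of $R(z)$ near $z=0$ in dimension $d\ge 3$ (this is where the $t^{-d/2}$ rate and the condition $d\ge3$, resp.\ $k<d/2-(1+2N)$, enter, via an expansion of the resolvent kernel and stationary phase / an interpolation between $L^2$-weighted spaces); this part is essentially Tsutsumi's analysis (Theorem~\ref{t0}) adapted to the operator $P$, and it contributes the $t^{-d/2}$ (resp.\ $t^{-d/2+\epsilon}$) term.

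For the high-frequency part I would deform $\Gamma$ downward into the region of holomorphy supplied by $(H_N)$: push the contour to $\Ima\lambda = -c\langle\lambda\rangle^{-N}$ (for $N>0$) or down to a fixed line (for $-\tfrac12\le N<0$, where $(H_N)$ with negative exponent is a non-trapping-type bound). On the horizontal part of the deformed contour the factor $e^{-it\lambda}$ gains $e^{-tc\langle\lambda\rangle^{-N}}$; combined with the polynomial bound $\|\chi R(\lambda)\chi\|\lesssim\langle\lambda\rangle^{N}$ from~\eqref{e2bis} and the smoothing from $\varphi(\lambda)=(1-i\lambda)^{-k}$, one is left with estimating an integral of the form $\int_{C_1}^\infty \lambda^{N-k} e^{-tc\lambda^{-N}}\,d\lambda$ (plus the vertical connecting segments, which are harmless). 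Optimizing: the integrand is largest near $\lambda\sim(t)^{1/N}$ up to logarithmic corrections, and a Laplace-type / dyadic estimate of this integral yields exactly the bound $\bigl((\log t)/t\bigr)^{k/N}$; the logarithm appears because one must cut the contour at $|\lambda|\sim(\log t)$-scale to keep the vertical pieces under control (here Theorem~\ref{t1}'s exponentially thin resonance-free strip and exponential bound $e^{C_3\sqrt{|\Rea z|}}$ are what let one take the contour that far and absorb the connecting segments). Taking the maximum of the two regimes gives~\eqref{esti}; in the case $-\tfrac12\le N<0$ the high-frequency part decays faster than any power after the fixed deformation, so the low-frequency part dominates and one gets~\eqref{estibis}, with the $\epsilon$ loss coming from the weighted-resolvent interpolation at low frequency and the restriction $k<d/2-(1+2N)$ ensuring the relevant integrals converge.

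I expect the main obstacle to be the high-frequency contour-deformation estimate done uniformly and quantitatively: one must justify the deformation (no poles between $\mathbb R$ and the curve $\Ima\lambda=-c\langle\lambda\rangle^{-N}$ — this is $(H_N)$, but one also needs control as $|\Rea\lambda|\to\infty$ to discard arcs at infinity), track precisely how the gain $e^{-tc\langle\lambda\rangle^{-N}}$ trades against the polynomial growth $\langle\lambda\rangle^{N}$ and the power $k$ of smoothing, and correctly produce the logarithmic factor by optimizing the truncation point of the contour. A secondary technical point is upgrading the cutoff-resolvent bound $\|\chi R\chi\|$ to the weighted bound with $\langle x\rangle^{-d/2-\epsilon}$ weights (rather than compactly supported $\chi$), which requires propagating the estimate into the free region via the asymptotic flatness~\eqref{bornes} and a Vodev-type argument as in Theorem~\ref{t1}; this is routine but must be stated carefully since the weights $\langle x\rangle^{-d/2-\epsilon}$ are borderline for the free resolvent. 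The low-frequency analysis, by contrast, is a direct adaptation of known stationary-phase estimates for the free Schrödinger resolvent and should not present genuinely new difficulties beyond bookkeeping the variable-coefficient operator $P$.
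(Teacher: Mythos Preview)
Your overall strategy---spectral representation plus contour deformation into the region furnished by $(H_N)$---is the right starting point, and for $-\tfrac12\le N<0$ your outline is essentially what the paper does (a direct deformation; the low-frequency part near $\xi=0$ dominates and gives $t^{-d/2}$ via the resolvent expansion). But for $N>0$ there is a genuine gap.

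The problem is the high-frequency estimate. If you deform the \emph{entire} contour $|\lambda|>C_1$ to $\Im\lambda=c|\lambda|^{-N}$ and bound the resulting integral $\int_{C_1}^\infty\lambda^{N-k}e^{-tc\lambda^{-N}}\,d\lambda$ by Laplace's method, a direct computation (substitute $u=\lambda^{-N}$, reduce to an incomplete Gamma integral) gives only $t^{-(k-N-1)/N}$, not $(\log t/t)^{k/N}$; you lose a full factor $t^{(N+1)/N}$, and you also need the restriction $k>N+1$ for convergence. The sharp rate is obtained not by deforming at high frequency but by \emph{not} deforming there and instead invoking the trivial $L^2$ bound $\|(1-iP)^{-k}\mathbf{1}_{\{P>M\}}\|\le M^{-k}$ with a $t$-dependent threshold $M\sim(t/\log t)^{1/N}$ (not $M\sim\log t$ as you write). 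The obstruction is that a sharp spectral cutoff at $M$ is not analytic, so you cannot then deform the complementary low-frequency piece. The paper resolves this with two devices absent from your proposal: a temporal cutoff $\psi(t)$ converting the problem to an inhomogeneous one (so the high-frequency part can be estimated in $L^2_t$ via Plancherel rather than pointwise), and a Gaussian convolution in the spectral parameter that serves as a soft, analytically deformable frequency localization. These two ingredients are the technical heart of the $N>0$ case and are what produce the logarithmic correction.

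Two smaller points. First, Theorem~\ref{t1}'s exponentially thin strip and exponential resolvent bound play no role in Theorem~\ref{t4}; under $(H_N)$ the region and the bound are already polynomial, and that is all that is used. Second, the passage from compact cutoffs $\chi$ to the weights $\langle x\rangle^{-d/2-\epsilon}$ is not a Vodev-type resolvent argument but a time-domain one: the paper compares $P$ with a non-trapping reference operator $P_0$ (equal to $P$ outside a large ball) via a Duhamel/commutator identity and invokes the Bony--H\"afner dispersive estimates for $P_0$. The restriction $k<d/2-(1+2N)$ in~\eqref{estibis} arises in this gluing step rather than in the contour deformation itself.
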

\begin{rem} Notice that as noticed by a referee, since the operator $\langle x \rangle ^{ d/2- \epsilon} ( (1- i P)^{-k} \langle x \rangle ^{ -d/2- \epsilon}$ is bounded on $L^2$,  ~\eqref{estibis} also holds for $k \leq 0$.
\end{rem}
\begin{rem}
\label{r1} In the case of two strictly convex obstacles $\Theta= {\mathcal O}_1\cup {\mathcal O}_2$ in ${\mathbb R}^d$
(or of several obstacles satisfying a pressure condition),
  relying on the works by Ikawa \cite{Ik1, Ik2, Ik3, Ik4}, Burq~\cite{B} showed that the 
assumptions $(H_N)$ above are satisfied for any $N> - \frac 1 2$ (actually the loss with respect to the non trapping
 estimates, $N= - \frac 1 2$ is only logarithmic). See also~\cite{Ch, NZ}. In the case of degenerate obstacles the work of Ikawa~\cite{Ik5, Ik6} suggests that Assumption ($H_N$) should be in this case true for some $N>0$ (depending on the order of vanishing of the curvature). It is satisfied for some semi-classical models (see~\cite[Section 6.4]{BuZw}).
\end{rem}
\begin{rem} The self adjointness of $P$ could probably (see~\cite{B}) be replaced by the assumption that $iP$  is maximal dissipative. 
For simplicity, we do not pursue in this direction here.
\end{rem}
\begin{rem}\label{rem.ert}
Another natural assumption would be 
\begin{assum}[AE]
The operator  $P$ satisfies (AE) if and only if it satisfies ~\eqref{op} \eqref{ellip}  and there exists $R_0>0, \theta_0>0, \epsilon _0 >0$ 
such that its coefficients are analytic in the domain
$$ \Lambda_{\theta_0, \epsilon_0, R_0}
 = \{ z = r \omega \in \mathbb{C}^d; 
\omega \in \mathbb{C}^{d}\text{ dist} ( \omega; \mathbb{S}^{d-1})< \epsilon _0, r \in \mathbb{C}, |r| \geq R_0, \text{ arg} (r) \in [ - \theta_0, \theta_0]\},   $$
(where $\mathbb{S}^{d-1}$ is the real sphere) and satisfies ~\eqref{bornes} in $\Lambda_{\theta_0, \epsilon_0, R_0} $.
\end{assum}
We are not aware of precise low frequency resolvent estimates under assumption (AE) for boundary value problems, though it is likely that they should hold. Provided such estimates were true, we could generalize our results to  this more general setting
\end{rem} 
\section{Gluying estimates: from space-local results to space-weighted estimates}

In this section we prove that it is enough to prove the results in Theorem~\ref{t4} with the weight $\langle x \rangle ^{-1} $ replaced by a cut off $\chi(x)$, $\chi \in C^\infty_0 ( \mathbb{R}^d)$. 
 In the remaining of this section, we assume that Theorem~\ref{t4} is true with $\langle x \rangle ^{-1} $replaced by a cut--off $\chi\in C^\infty_0 ( \mathbb{R}^d)$. 

Namely
\begin{itemize}
\item If $0<N$, for any real $k>0$,   there exists  $C_k>0$ 
such that  for any  $t\geq 1$,we have 
\begin{equation}
\left\| \chi(x)\frac{e^{-itP}}{(1-iP)^{k/2}}\ \chi(x)\right\|_{{\mathcal L}({\mathcal H})}
\leq C_k
%\begin{cases}
\max  \Bigl(\frac 1{t^{\frac d 2 - \epsilon}} , \Bigl(\frac{\log(t)}{t} \Bigr) ^{\frac k N- \epsilon}\Bigr) \text{ if } (E) \text { and } H_N \text {\ are satisfied}.\\
%max  \Bigl( \frac 1{t^{\frac d 2 - \epsilon }} , \Bigl(\frac{\log(t)}{t} \Bigr) ^{\frac k N}\Bigr)\text{ if } (AE) \text { and } H_N \text {\ are satisfied}
%\end{cases}
\label{estiter}
\end{equation}
\item If $-\frac 1 2 \leq N<0$, for any real $0\leq k< d/2- (1+2N)$,  and any $\epsilon >0$, there exists  $C_{k, \epsilon}>0$ 
such that  for any  $t\geq 1$,we have 
\begin{equation}
\label{estiquar}
\left\| \chi(x)\ e^{-itP}\  (1-iP)^{k/2} \chi(x)\right\|_{{\mathcal L}({\mathcal H})}
\leq C_k
 \langle t \rangle^{-\frac d 2 + \epsilon }.
\end{equation}
\end{itemize}
The idea is to use,  as a black box, some results by Bony and H\"affner that we recall below (see~\cite[Theorems 1-iii), 5-i)]{BoHa}
\begin{theo}[Bony-H\"afner]
Assume that there is no obstacle and that the operator $P_0$ satisfies assumptions~\eqref{op}, \eqref{ellip} and furthermore is {\em non trapping}.

 Then we have 
\begin{equation}\label{boha1}
\exists C>0; \forall |t| \geq 1, \|\langle x\rangle ^{- d/2} e^{it P_0} \langle x\rangle ^{- d/2} \|_{\mathcal{L} ( H^{- d /2}; L^2)}\leq C\langle t\rangle ^{- d/2+ \epsilon}.
\end{equation} 
Furthermore, for all $\phi \in C^\infty_0 (]0, + \infty[)$ and $\mu \geq 0$ there exists $h_0>0$ and $C>0$ such that for all $0<h<h_0$, and all $t\neq 0$,
\begin{equation}\label{boha2}
\|\langle x\rangle ^{- \mu} e^{it P_0} \phi( h^2 P_0)\langle x\rangle ^{- \mu} \|_{\mathcal{L} ( L^2)}\leq \langle th^{-1}\rangle ^{-\mu}.
\end{equation} 
\end{theo}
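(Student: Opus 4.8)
The goal is to upgrade the space-localized estimates \eqref{estiter}--\eqref{estiquar} (valid for the obstacle operator $P$ with a compactly supported cutoff $\chi$) to the weighted estimates \eqref{esti}--\eqref{estibis} with the polynomial weight $\langle x\rangle^{-d/2-\epsilon}$ (resp.\ $\langle x\rangle^{-d/2}$), using as black boxes the free non-trapping estimates \eqref{boha1}--\eqref{boha2} of Bony--H\"afner. The plan is a standard but delicate Duhamel/parametrix gluing: fix $\chi\in C^\infty_0(\mathbb{R}^d)$ equal to $1$ on $B(0,a+1)\supset\Theta$, write the obstacle evolution $e^{-itP}$ as a superposition of the free evolution $e^{-itP_0}$ (which, thanks to assumption (E), is honestly the free operator outside a fixed ball and is non-trapping there) and correction terms supported near the obstacle, where \eqref{estiter}--\eqref{estiquar} apply. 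Concretely, I would introduce $\psi=1-\chi$, so that $\psi u$ is supported away from $\Theta$ and $(i\partial_t-P)(\psi e^{-itP}u_0)= -[P,\psi]e^{-itP}u_0=:f(t)$, with $[P,\psi]$ a first-order operator supported in the annulus $\{\chi\not\equiv 0,1\}$; by Duhamel for $P_0$,
\begin{equation}
\psi e^{-itP}u_0 = e^{-itP_0}(\psi u_0) - i\int_0^t e^{-i(t-s)P_0}\,[P,\psi]\,e^{-isP}u_0\,ds.
\label{duh}
\end{equation}
Applying $\langle x\rangle^{-d/2-\epsilon}$ on the left and inserting $\chi$'s (legitimate since $[P,\psi]$ is compactly supported, so $[P,\psi]=[P,\psi]\chi$), the free term is controlled by \eqref{boha1} after interpolating away the $\mathcal{H}^{-d/2}$ norm, and the integral term couples the free decay $\langle t-s\rangle^{-d/2+\epsilon}$ of $\langle x\rangle^{-d/2-\epsilon}e^{-i(t-s)P_0}\chi$ to the local decay of $\chi e^{-isP}\chi u_0$ coming from \eqref{estiter}--\eqref{estiquar}.

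\textbf{Key steps, in order.} First I would reduce matters to $u_0$ with $\langle x\rangle^{d/2+\epsilon}u_0\in\mathcal{H}$ (resp.\ the dual weight), and split $u_0=\chi u_0+\psi u_0$; the piece $\chi e^{-itP}\chi u_0$ is already handled by hypothesis, so only $\langle x\rangle^{-d/2-\epsilon}e^{-itP}\psi u_0$ and the "tail" $\langle x\rangle^{-d/2-\epsilon}\psi e^{-itP}u_0$ remain, and by \eqref{duh} both are reduced to (a) a free-evolution term and (b) a Duhamel integral whose integrand is a product of a free-propagator factor and an obstacle-propagator factor squeezed between cutoffs. Second, I would estimate the free term using \eqref{boha1}; here one must note that \eqref{boha1} is stated with an $\mathcal{H}^{-d/2}$ norm on the right, so to get an $L^2\to L^2$ bound with an extra $\epsilon$ in the weight one interpolates \eqref{boha1} against the trivial $L^2$-unitarity bound, or alternatively uses \eqref{boha2} summed dyadically in frequency plus a low-frequency argument; the slightly stronger weight $\langle x\rangle^{-d/2-\epsilon}$ is exactly what makes this interpolation lossless up to $t^{-d/2+\epsilon'}$. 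Third, and this is the crux, I would estimate the Duhamel integral
\begin{equation}
\int_0^t \bigl\|\langle x\rangle^{-d/2-\epsilon}e^{-i(t-s)P_0}\chi\bigr\|\;\bigl\|\chi e^{-isP}\chi\bigr\|\;ds
\end{equation}
by plugging in $\langle t-s\rangle^{-d/2+\epsilon}$ from the free estimate and the hypothesis bound for $\|\chi e^{-isP}\chi\|$ — which is $\max(s^{-d/2+\epsilon},(s^{-1}\log s)^{k/N-\epsilon})$ when $N>0$ (after accounting for the $(1-iP)^{k/2}$, which one moves through the cutoffs using that $\langle x\rangle^{-d/2-\epsilon}(1-iP)^{k/2}\chi$ and its adjoint are bounded modulo acceptable errors) and $s^{-d/2+\epsilon}$ when $N<0$ — and then performing the time convolution. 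The convolution of $\langle t-s\rangle^{-d/2+\epsilon}$ with $\langle s\rangle^{-\alpha}$ over $[0,t]$ behaves, for $d\geq 3$ so that $d/2>1$, like $\langle t\rangle^{-\min(d/2-\epsilon,\alpha)}$ up to logarithms when the two exponents coincide; tracking this for $\alpha=k/N$ produces exactly the $\max\bigl(t^{-d/2},(t^{-1}\log t)^{k/N}\bigr)$ of \eqref{esti} (the extra logarithm in \eqref{estiter} being absorbed), and for $\alpha=d/2-\epsilon$ it produces \eqref{estibis}. A symmetric argument handles the adjoint side (the second cutoff in \eqref{esti}), and one iterates \eqref{duh} once more if needed so that \emph{both} the initial datum and the final localization carry the polynomial weight.

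\textbf{Main obstacle.} The delicate point is not the convolution bookkeeping but making the commutator/cutoff manipulations compatible with the operator $(1-iP)^{k/2}$: one needs that conjugating $(1-iP)^{k/2}$ by a cutoff, or commuting it past $\langle x\rangle^{-d/2-\epsilon}$, only generates errors that are either smoothing, or still of the form "cutoff $\times$ evolution $\times$ cutoff" so that the induction closes, and one must be careful that for $k/2$ non-integer $(1-iP)^{k/2}$ is nonlocal and its kernel has only polynomial off-diagonal decay — so the errors are controlled but not compactly supported, and one pays a (harmless) power of $\langle x\rangle$ that the $\epsilon$ in the weight is there to absorb. A second, lesser, subtlety is the low-frequency regime: \eqref{boha2} requires $\phi\in C^\infty_0(]0,+\infty[)$, so near frequency zero one falls back on \eqref{boha1} directly (or on the fact that for $P$ in an exterior domain with $d\geq 3$ the low-frequency resolvent is bounded, so the corresponding piece of $e^{-itP}$ contributes the full $t^{-d/2}$ decay by stationary phase); ensuring these low- and high-frequency pieces match up without losing the claimed exponent is the last thing to check, and is where the restriction $k<d/2-(1+2N)$ in the case $N<0$ enters, guaranteeing that $(1-iP)^{k/2}$ does not overwhelm the $d/2$ decay budget.
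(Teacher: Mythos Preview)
Your proposal does not address the stated theorem. The statement you were asked to prove is the Bony--H\"afner theorem for the \emph{free}, non-trapping operator $P_0$ (no obstacle). In the paper this theorem is not proved at all: it is quoted verbatim as a black box from~\cite{BoHa} (see the sentence preceding it: ``The idea is to use, as a black box, some results by Bony and H\"affner that we recall below''). There is therefore no proof in the paper to compare against, and any genuine proof would have to reproduce the semiclassical Mourre/Isozaki--Kitada machinery of~\cite{BoHa}. What you have written is instead a sketch of the \emph{gluing reduction} of Section~3 --- the argument that \emph{uses} \eqref{boha1}--\eqref{boha2} to upgrade~\eqref{estiter}--\eqref{estiquar} to~\eqref{esti}--\eqref{estibis}. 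That is a different statement.

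Since you have effectively attempted the gluing step rather than the stated theorem, let me compare your sketch to the paper's actual gluing argument. The Duhamel setup~\eqref{duh} and the time-convolution of $\langle t-s\rangle^{-d/2+\epsilon}$ against the local decay in $s$ are exactly what the paper does for the ``easy'' region $|s|,|t-s|\geq 1/2$ (see~\eqref{estim}). But your treatment of the first-order commutator $[P,\chi]$ is incomplete: you acknowledge a derivative-loss issue and attribute it to moving $(1-iP)^{k/2}$ through cutoffs, whereas the real difficulty is recovering the one derivative lost to $[P,\chi]$ in the endpoint regions $|t-s|\leq 1/2$ and $|s|\leq 1/2$, where neither factor in the convolution is small. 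The paper handles this via two intermediate results you do not mention: Corollary~\ref{cor.3.1} (an $L^1_t((-1,1);H^{k}\!\to\!H^{k+\nu})$ smoothing bound for $e^{itP_0}$, derived from~\eqref{boha2} by dyadic summation) and Proposition~\ref{prop.3.2} (the analogous $L^1_t$ smoothing for the obstacle flow under $(H_N)$, proved by contour deformation with an almost-analytic extension). These $L^1$-in-time gains of a fractional derivative are precisely what close the endpoint contributions; without them your convolution bound does not go through at $s\sim 0$ or $s\sim t$.
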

\begin{coro}\label{cor.3.1}
Assume that there is no obstacle and that the operator $P_0$ satisfies assumptions~\eqref{op}, \eqref{ellip} and furthermore is {\em non trapping}. Then for any $0< \nu < \mu$, $\nu <1$, $k\in \mathbb{R}$, there exists $C>0$,
\begin{equation}\label{boha3}
\|\langle x\rangle ^{- \mu} e^{it P_0} \langle x\rangle ^{- \mu} \|_{\mathcal{L} ( H^k( \mathbb{R}^d); L^1(-1,1); H^{k+\nu}( \mathbb{R}^d)))}\leq C.
\end{equation} 
\end{coro}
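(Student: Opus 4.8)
The plan is to interpolate between the two estimates \eqref{boha1} and \eqref{boha2} supplied by the Bony–Häfner theorem, and then to sum over a Littlewood–Paley decomposition in frequency to recover control on a full Sobolev scale. First I would fix a dyadic partition of unity $1 = \phi_0(\lambda) + \sum_{j\geq 1}\phi(2^{-2j}\lambda)$ with $\phi \in C^\infty_0(]0,+\infty[)$, so that $\phi(2^{-2j}P_0)$ localizes $P_0$ at frequency $\sim 2^{j}$; this is legitimate because $P_0$ is self-adjoint and positive. Applying \eqref{boha2} with $h = 2^{-j}$ and $\mu$ as in the statement gives, for each dyadic piece,
\begin{equation}
\bigl\| \langle x\rangle^{-\mu} e^{itP_0}\phi(2^{-2j}P_0)\langle x\rangle^{-\mu}\bigr\|_{\mathcal{L}(L^2)} \leq C\,\langle t\,2^{j}\rangle^{-\mu},
\label{eq.cor.dyadic}
\end{equation}
while the $\mathcal{H}^k \to \mathcal{H}^{k+\nu}$ mapping costs a factor $\sim 2^{j\nu}$ on the $j$-th block. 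The key point is that on the range of $\phi(2^{-2j}P_0)$ the weight $\langle x\rangle^{-\mu}$ commutes with $P_0$ up to acceptable errors (smoothing in the frequency localization, controlled by the symbol calculus under \eqref{op}--\eqref{ellip}), so that inserting $\langle x\rangle^{-\mu}$ on both sides does not destroy the dyadic estimate.

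Next I would integrate in time over $(-1,1)$ and sum the blocks. For the piece $2^{j}|t| \leq 1$ we simply use \eqref{eq.cor.dyadic} with the trivial bound $\langle t2^j\rangle^{-\mu}\leq 1$ together with the $L^1_t$ length $\leq 2$; for the piece $2^{j}|t| \geq 1$ we use $\langle t2^j\rangle^{-\mu}\lesssim (2^j|t|)^{-\mu}$ and integrate $\int_{2^{-j}}^1 (2^j t)^{-\mu}\,dt \lesssim 2^{-j}$ (here $\mu > \nu \geq 0$, and if $\mu \geq 1$ one also uses $\mu>\nu$ to keep the geometric series convergent; the hypothesis $\nu < 1$ guarantees the low-$t$ contribution is integrable as well). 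Multiplying by the Sobolev loss $2^{j\nu}$, the $j$-th term of the sum over $\int_{-1}^1$ is bounded by $C\,2^{j(\nu-1)}\bigl(1 + 2^{j\nu}\,\mathbf{1}_{\text{small }t\text{ regime}}\bigr)$, which after a short case analysis sums to a finite constant precisely because $\nu < \min(\mu,1)$. Interpolating \eqref{boha1} (which handles the borderline $k+\nu$ at the level $H^{-d/2}\to L^2$) with these dyadic bounds via the self-adjoint functional calculus, or alternatively just running the dyadic argument directly at regularity $k$, yields \eqref{boha3} with the $L^1(-1,1)$ norm in time built in.

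The main obstacle I anticipate is \emph{not} the summation — that is a routine geometric-series estimate once the building blocks are in place — but rather the commutation of the spatial weights $\langle x\rangle^{-\mu}$ with the spectral projector $\phi(2^{-2j}P_0)$, i.e. justifying that \eqref{boha2}, which is stated with the weights outside, still gives \eqref{eq.cor.dyadic} in the form needed after one inserts the frequency cut-off between the weight and the propagator. For the flat Laplacian this is standard (the commutator $[\langle x\rangle^{-\mu},\phi(2^{-2j}P_0)]$ gains a full power of $2^{-j}$ and is again weighted), but under the general coefficients \eqref{op}--\eqref{bornes} one must invoke a (semiclassical) pseudodifferential calculus adapted to $P_0$, using the asymptotic flatness \eqref{bornes} to control the error terms uniformly in $j$. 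A second, milder technical point is that $P_0 \geq 0$ may have spectrum accumulating at $0$, so the low-frequency block $\phi_0(P_0)$ must be treated separately using \eqref{boha1} directly rather than \eqref{boha2}; this causes no loss since at bounded frequency the weight $\langle x\rangle^{-\mu}$ with $\mu>\nu$ already absorbs the $\nu$ derivatives.
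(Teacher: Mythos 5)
Your proof takes essentially the same route as the paper's: apply the Bony--H\"afner estimate~\eqref{boha2} on each dyadic block $\phi(2^{-2j}P_0)$, convert the $L^2$ bound to an $H^{\nu}$ bound at the cost $h^{-\nu}\sim 2^{j\nu}$, integrate in $t$ over $(-1,1)$, and sum the resulting geometric series using $\nu<\min(\mu,1)$. The one point you flag that the paper glosses over with a ``$\sim$'' --- commuting $\langle x\rangle^{-\mu}$ past the spectral cut-off to justify the frequency localization of the output --- is indeed the only step requiring a (standard) pseudodifferential remark; beyond that minor emphasis, the two arguments coincide.
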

\begin{proof} For simplicity we just prove the result for $k=0$, the general case beeing similar. For fixed $h>0$, 
$$ \| \langle x\rangle ^{- \mu} e^{it P_0} \phi( h^2 P_0)\langle x\rangle ^{- \mu} \| _{H^\nu( \mathbb{R}^d)} \sim h^{-\nu} \|  \| \langle x\rangle ^{- \mu} e^{it P_0} \phi( h^2 P_0)\langle x\rangle ^{- \mu} \| _{L^2( \mathbb{R}^d)},$$
and consequently according to~\eqref{boha2}
\begin{multline}
 \| \langle x\rangle ^{- \mu} e^{it P_0} \phi( h^2 P_0)\langle x\rangle ^{- \mu} \|_{\mathcal{L} ( L^2( \mathbb{R}^d); L^1(-1,1); H^\nu( \mathbb{R}^d))}\\
 \leq C h^{- \nu} \int_0^1 \langle th^{-1} \rangle ^{- \mu} dt \leq C h^{1- \nu} \max (1, h^{\mu -1}) \leq C h^ \delta,
 \end{multline}
 where $\delta >0$ and the estimate~\eqref{boha3} follows from the dyadic decomposition
 $$ u = \Psi(P_0) u + \sum_{j=1}^{+\infty} \phi (2^{-2j} P_0) u_0,$$
  and  summation.
 \end{proof}

 We now turn to proving the same type of estimate under assumption $H_N$ in the presence of an obstacle
 \begin{prop}\label{prop.3.2} Assume that the operator satisfies (E) and ($H_N$).  Then 
 for any $0< \nu < \mu$, $\nu <-2N$,  there exists $C>0$,
\begin{equation}\label{boha3}
\|\langle x\rangle ^{- \mu} e^{it P_0} \phi( h^2 P_0)\langle x\rangle ^{- \mu} \|_{\mathcal{L} ( H^k( \mathbb{R}^d); L^1(-1,1); H^{k+\nu}( \mathbb{R}^d)))}\leq C.
\end{equation} 
 \end{prop}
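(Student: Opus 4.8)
\noindent\emph{Sketch of proof.}
The plan is to run the argument of Corollary~\ref{cor.3.1}, replacing the free dispersive bound~\eqref{boha2}, which is no longer valid near the obstacle, by the resolvent estimate~\eqref{e2bis} of ($H_N$); throughout, $P$ denotes the obstacle operator of Section~2, the one to which (E) and ($H_N$) refer. By the standard far--field reduction---propagation estimates for $e^{-itP}$ outside a large ball, where $P=P_0:=-\Delta$ by (E), combined with Duhamel's formula and with Corollary~\ref{cor.3.1} applied to $P_0$ (here the hypothesis $\nu<\mu$ is used)---it is enough to prove the inequality of Proposition~\ref{prop.3.2} with $\langle x\rangle^{-\mu}$ replaced by a cut--off $\chi\in C^\infty_0(\mathbb{R}^d)$. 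Since every operator below is spectrally localized at frequency $\sim h^{-1}$, where $\langle D\rangle^{\nu}$ and $(1-iP)^{\nu/2}$ are comparable, on the range of $\phi(h^2P)$, to $h^{-\nu}$ times an $L^2$--bounded operator, this reduces in turn to
\begin{equation*}
\sup_{0<h\le h_0}\ h^{-\nu}\int_{-1}^{1}\bigl\|\chi\, e^{-itP}\,\phi(h^2P)\,\chi\bigr\|_{\mathcal{L}(L^2)}\,dt<\infty ,\qquad 0<\nu<-2N,
\end{equation*}
followed by the dyadic summation over $1=\Psi(P)+\sum_{j\ge1}\phi(2^{-2j}P)$, exactly as in Corollary~\ref{cor.3.1}. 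Finally, since $\int_{-1}^{1}\langle th^{-1}\rangle^{2N}\,dt\le C\,h^{-2N}$ when $-\tfrac12\le N<0$ (with a logarithmic factor when $N=-\tfrac12$, harmless as $\nu<-2N$ is strict), it suffices to establish the pointwise decay estimate
\begin{equation*}
\bigl\|\chi\, e^{-itP}\,\phi(h^2P)\,\chi\bigr\|_{\mathcal{L}(L^2)}\le C\,\langle th^{-1}\rangle^{2N},\qquad |t|\le1 .
\end{equation*}

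To prove this I would combine the spectral representation
\begin{equation*}
\chi\, e^{-itP}\,\phi(h^2P)\,\chi=\frac{1}{2\pi i}\int_{0}^{\infty}e^{-it\lambda}\,\phi(h^2\lambda)\,\chi\bigl(R(\lambda+i0)-R(\lambda-i0)\bigr)\chi\,d\lambda
\end{equation*}
with the resolvent gluing of Vainberg type. Using (E), fix cut--offs $\psi_0,\psi_1$ with $\psi_0+\psi_1\equiv1$, $\psi_1\equiv1$ near $\Theta$ and near $\operatorname{supp}\chi$, and $\operatorname{supp}\psi_0$ inside the region where $P=P_0$; then
\begin{equation*}
R(z)=\psi_1\,R_{\mathrm{ref}}(z)\,\widetilde\psi_1+\psi_0\,R_0(z)\,\widetilde\psi_0-R(z)\,K(z),
\end{equation*}
where $R_0$ is the free resolvent, $R_{\mathrm{ref}}$ is the resolvent of an auxiliary problem with no real poles (the obstacle inside a large ball, with an outgoing condition on the outer sphere), $\widetilde\psi_j$ are slightly enlarged cut--offs, and $K(z)=[P_0,\psi_1]R_{\mathrm{ref}}(z)\widetilde\psi_1+[P_0,\psi_0]R_0(z)\widetilde\psi_0$ has Schwartz kernel compactly supported in its first variable. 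Inserting this identity in the spectral integral decomposes $\chi\, e^{-itP}\,\phi(h^2P)\,\chi$ into a ``free'' piece, built only from $R_0$ and $R_{\mathrm{ref}}$, and a ``trapped'' piece, in which the full resolvent appears only as $\chi R(z)\widetilde\chi$ with $\widetilde\chi$ a fixed cut--off of large support, so that~\eqref{e2bis} applies to it.

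The free piece is treated exactly as in Corollary~\ref{cor.3.1}: $e^{-itP_0}$ satisfies~\eqref{boha2}, the reference propagator satisfies the same bound (its trapped set being empty, one needs only the microlocal dispersive estimate away from the outer sphere), and assembling these along the gluing through Duhamel's formula yields a contribution bounded by $C\langle th^{-1}\rangle^{-M}$ for every $M$. For the trapped piece one is left with operators of the form $\int e^{-it\lambda}\,\phi(h^2\lambda)\,a(h^2\lambda)\,\chi R(\lambda+i0)\widetilde\chi\,b(h^2\lambda)\,d\lambda$ (and its conjugate), where $a,b$ collect the surrounding free pieces. Following the method of~\cite{B}, I would push the contour off the real axis into the region~\eqref{domaine}, where ($H_N$) provides holomorphy of $\chi R(z)\widetilde\chi$ with $\|\chi R(z)\widetilde\chi\|\le C|z|^{N}$: since $\phi$ is supported in a compact subset of $(0,\infty)$, on the band $|\lambda|\sim h^{-2}$ the distance available to the real axis is $\sim h^{2N}$, and on the deformed contour the resolvent is of size $\sim h^{-2N}$ while $|e^{-itz}|=e^{t\,\Ima z}\le e^{-c|t|h^{2N}}$; balancing the depth of the deformation against $|t|$, together with repeated integrations by parts in $\lambda$ to dispose of the vertical segments (the resolvent identity producing only further factors $\chi R(z)\widetilde\chi$), leads to the bound $C\langle th^{-1}\rangle^{2N}$ for this piece.

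The main obstacle is this last contour--deformation step. One must treat the two boundary values $R(\lambda\pm i0)$ separately, each continuing across only one side of the branch cut (and, in even dimension, onto the correct leaf of the logarithmic cover of $\mathbb{C}^{*}$); one must control, uniformly in $h$ and in $|t|\le1$, the contributions of the vertical segments of the deformed contour and of the transition region of $\phi$; and one must verify that no pole of $R_{\mathrm{ref}}$ is crossed, which is precisely why the auxiliary problem is posed with an outgoing, rather than Dirichlet, condition. The other ingredients---the gluing identity, the use of~\eqref{boha2} on the free pieces, the far--field reduction from $\langle x\rangle^{-\mu}$ to $\chi$, and the dyadic summation---are routine adaptations of the proof of Corollary~\ref{cor.3.1} and of the standard Lax--Phillips--Vainberg machinery, and produce at worst a harmless loss of a power of $\log(1/h)$.
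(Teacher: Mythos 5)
Your proposal diverges from the paper's proof in two essential ways, and I think the second introduces a real gap.

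First, the decomposition. The paper proves the cut-off version of the estimate directly by writing $\chi e^{itP}\phi(h^2P)\chi$ as a contour integral of $\chi(P+z)^{-1}\chi$ against $\phi(h^2z)$, rescaling $\zeta=h^{-2N}z$ so that the domain~\eqref{domaine} of $(H_N)$ becomes the $h$-independent strip $\Ima\zeta<c$, and then applying Green's formula to an almost-analytic extension $\phi_h^k$ of the (rescaled) spectral cut-off. Assumption $(H_N)$ bounds $\chi(P+h^{2N}\zeta)^{-1}\chi$ on the deformed contour and nothing more is needed. You instead insert a Vainberg-type parametrix $R=\psi_1 R_{\mathrm{ref}}\widetilde\psi_1+\psi_0R_0\widetilde\psi_0-RK$, introducing an auxiliary reference resolvent with an outgoing condition on a large sphere, pole-free verification, branch-cut bookkeeping, etc. This is a genuinely heavier route, and it is not needed here: $(H_N)$ is formulated precisely as a bound on the full cut-off resolvent $\chi R(z)\chi$, so there is nothing to "glue".

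Second, and more seriously, the pointwise target. You reduce to the claim $\bigl\|\chi e^{-itP}\phi(h^2P)\chi\bigr\|_{\mathcal{L}(L^2)}\leq C\langle th^{-1}\rangle^{2N}$ for $|t|\leq1$. This is strictly stronger than what the paper establishes, and it does not follow from the contour deformation you outline. Green's formula together with the $|z|^N$ bound of $(H_N)$ gives, as in the paper, an estimate of the shape $h^{-2-2N}\bigl(e^{-th^{2N}}+C_k(h^2/t)^{k+1}\bigr)$: the $h^{-2N}$ size of the resolvent, multiplied by the $h^{-2}$ width of the frequency window (and the Jacobian of the rescaling), produces the prefactor $h^{-2-2N}$, which is not absorbed for $t\lesssim h^{-2N}$. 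The paper never claims a good pointwise bound there; it simply uses the trivial bound $1$ on the (small, measure $\sim h^{-2N-\epsilon}$) interval $|t|\leq h^{-2N-\epsilon}$ and the superpolynomially small bound elsewhere, and this is exactly what the $L^1$-in-$t$ norm needs. By contrast, your claimed bound at $t\sim h^{-2N}$ would be $\sim h^{-2N(2N+1)}$, i.e.\ smaller than $1$, and I do not see how the sketched deformation, with or without the gluing, reaches that. You do not actually need the pointwise statement—if you drop it and estimate the $L^1(-1,1)$ norm directly (trivial bound near $t=0$, contour bound away from $t=0$), your argument realigns with the paper's; as written, however, the reduction to the pointwise decay is an unproved and, I believe, false step.

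Finally, a small presentational remark: you do the far-field reduction (replacing $\langle x\rangle^{-\mu}$ by $\chi$) first and then prove the cut-off estimate, whereas the paper proves the cut-off estimate first and then glues in the weight via Duhamel and the Bony--Häfner estimates for the non-trapping comparison operator $P_0$; this part of your argument is in substance the same as the paper's and is fine.
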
 
 \begin{proof} 
 Again, for simplicity we just prove the result for $k=0$. Let us first prove this estimate with the weight $\langle x \rangle^{- \mu}$ replaced by a cut off $\chi (x)$. We start from the formula 
$$\chi (x)  e^{itP} \phi( h^2 P) \chi(x) = \int _{\Im z = 0^- } e^{itz } \phi(h^2 z) \chi(x) ( P+z)^{-1} \chi(x) dz,$$
and perform the change of variables 
$\zeta = h^{-2N} z$, which gives 
$$\chi (x)  e^{itP} \phi( h^2 P) \chi(x) = h^{2N} \int _{\Im \zeta  = 0^- } e^{ih^{2N}t\zeta  } \phi(h^{2+ 2N} \zeta)) \chi(x) ( P+h^{2N} \zeta) )^{-1} \chi(x) dz,$$
and according to~\eqref{domaine} the r.h.s. is holomorphic in the region
$ \Im \zeta <c$. We use the quasi-holomorphic extension of the function $\phi_h( \zeta) = \phi(h^{2+ 2N} \zeta)$, $\phi_h^k$ defined by 
$$  \phi_h^k (x+ iy)= \sum_{j=0}^k \frac{  d^k} {dx^k } \phi_h (x) \frac { ( iy)^j }{ j !} \rho(y),$$
with $\rho \in C^\infty_0 (0,c)$ equal to $1$ near $0$, 
which satisfies 
$$ |\partial_{\overline{z} }  \phi_h^k ( x+ iy)) | \leq C |\rho'(y)|  + C h^{2(1+N) (k+1)}  |y|^{k} | \rho(y)|.$$
As a consequence, using Green's formula and~\eqref{e2bis}, we obtain (using that the support of $\phi_h$ is included in the set $|\zeta| \leq h^{-2-2N}$)
 \begin{multline}\| \chi (x)  e^{itP} \phi( h^2 P) \chi(x)\|_{\mathcal{L}(L^2)}  = h^{2N} \left\| \int _{ \zeta = x+ iy, y >0 } e^{ih^{2N}t\zeta  } \partial_{\overline{\zeta}} \phi_h^k (\zeta) \chi(x) ( P+h^{2N} \zeta )^{-1} \chi(x) |d\zeta| \right\| ,\\
 \leq C h^{2N}h^{-2- 2N}h^{-2N} \int_{y=0}^c  e^{-t h^{2N }y}( |\rho'(y)|  + C h^{2(1+N) (k+1)}  |y|^{k} ) dy 
 \\
 \leq C h^{-2- 2N} \bigl( e^{-t h^{2N} }+ C_k (\frac{ h^{2}} {t} )^{k+1}\bigr).
 \end{multline}
 And consequently, using this estimate for $|t| \geq h^{-2N- \epsilon}$ and the trivial bound (by $1$) otherwise, we get 
  
 $$ \| \chi (x)  e^{itP} \phi( h^2 P) \chi(x)u_0 \|_{L^1(-1,1); H^\nu}\leq h^{- \nu} ( h^{-2N - \epsilon} + O(h^\infty)),$$
 and the result follows if $\epsilon >0$ is small enough.
 Now we extend the result by replacing the $\chi(x)$ weight on the left by $\langle x \rangle ^{- \mu}$.  
Let 
$$u(t) = (1- \chi(x) )e^{itP}  \chi(x)u_0 ,$$

solution of 
\begin{equation}\label{nonhomogbis}
 (i \partial_t + P) u = - [P, \chi] e^{itP}  \chi(x)u_0.
 \end{equation}
Since $P$ is euclidean or asymptotically flat, if $\chi =1$ on a sufficiently large domain, and since $v$ and $[P,\chi] u$ are supported
 on the region where $\chi \neq 1$, we can replace $P$ in~\eqref{nonhomogbis} by an operator $P_0$ on $L^2( \mathbb{R}^d)$ (no obstacle $\Theta$),
 whose coefficients coincide with the coefficients of $P$ on the support of $1- \chi$ and which is non trapping (because, $P_0$ can be taken
 arbitrarily close to the euclidean Laplacian in $C^\infty$-norm). It consequently satisfies~\eqref{boha1} and~\eqref{boha2}, and we now have 
 
$$ (i \partial_t + P_0) u = - [P_0, \chi] e^{itP}  \chi(x)u_0 \Rightarrow (1- \chi) u = e^{itP_0 } (1- \chi) u_0+ i \int_0 ^t e^{i(t-s)} P_0   [P_0, \chi] e^{is P} \chi u_0 ds.
$$ 

The contribution of the first term is easily dealt with. To estimate the contribution of the second term, we choose $\nu_1 < - 2N$ and $\nu_2 <1$ such that $\nu = \nu_1 + \nu_2 -1$  and $\widetilde{\chi}$ equal to $1$ near the support of $\nabla \chi$, and write
\begin{multline}
\left\| \langle x \rangle ^{- \mu} \int_0 ^t e^{i(t-s) P_0}   [P_0, \chi] e^{is P} \chi u_0 ds\right\|_{L^1((-1,1)_t; H^{\nu_1+ \nu_2 -1} )} \\
\leq \int_{(t,s) \in (-1,1)^2} \left\|\langle x \rangle ^{- \mu} e^{i(t-s) P_0}\widetilde{\chi}  [P_0, \chi] \widetilde{\chi}e^{is P} \chi u_0\right\|_{H^\nu} ds dt\\
\leq \int_{(t,s) \in (-1,1)^2} \left\|\langle x \rangle ^{- \mu} e^{i(t-s) P_0} \widetilde{\chi} \right\|_{H^{\nu_1-1} \rightarrow H^{\nu_1 + \nu_2 -1} }  \left\| \widetilde{\chi} e^{is P} \chi\right\| _{L^2 \rightarrow H^{\nu_1}} ds dt \ \| u_0 \|_{L^2} 
\leq C \|u_0 \|_{L^2}. 
\end{multline}
This shows that we can replace  the $\chi(x)$ weight on the left by $\langle x \rangle ^{- \mu}$. To replace the  $\chi(x)$ weight on the right by $\langle x \rangle ^{- \mu}$, we argue by duality and proceed in a similar way: here we have to deal with operators norms of the adjoint operator 
$$ \langle x \rangle ^{- \mu} \int_{-1}^1 e^{-isP_0} \langle x \rangle ^{- \mu}  f(s) ds,$$ and bound its norm from $L^\infty H^{-1} $ to $L^2$, knowing that the similar bound holds when the weight on the left is replaced by a cut-off. We  express $ (1- \chi) e^{-isP_0} $ in terms of $ e^{-isP}$ 
and the proof follows by similar considerations as previously. 
\end{proof}
Let us now come back to the proof of Theorem~\ref{t4} knowing that ~\eqref{estiter} and~\eqref{estiquar} hold.  
 As a first step, we show that we can replace the cut off $\chi$ on the left in~ \eqref{estiquar} by the weight $\langle x \rangle ^{- d/2}$.
 We write again
$$ \langle x \rangle ^{- d/2} (1- \chi) (1-iP)^k e^{-it P} \chi  = (1-iP)^k    (1-iP)^{-k} \langle x \rangle ^{- d/2} (1- \chi) (1-iP)^k  \langle x \rangle ^{ d/2}  \langle x \rangle ^{- d/2} e^{-it P} \chi,$$
and by standard pseudodifferential calculus (away from the boundary) we have that the operator  
$$  A=  (1-iP)^{-k} \langle x \rangle ^{- d/2} (1- \chi) (1-iP)^k  \langle x \rangle ^{ d/2}, $$ is, modulo smoothing and decaying terms, equal to 
$A (1- {\chi_1}) $ (if $\chi $ is equal to $1$ on the support of $ {\chi_1}$) and is bounded on $H^k$. As a consequence, it is enough to estimate 
 $  \langle x \rangle ^{- d/2} (1- {\chi_1}) e^{-it P} \chi$ from $L^2$ to $H^k$. 
   Let $u = e^{itP} \chi u_0$.
The function $v = (1- {\chi_1})\chi u$ satisfies 
\begin{equation}
\label{nonhomog}
( i \partial_t  - P ) v = -[ P, {\chi_1}] u .
\end{equation}
We can, as before, replace here $P$ by $P_0$ which satisfies~\eqref{boha1}, \eqref{boha2} and according to~\eqref{nonhomog} and  Duhamel's formula, we have 
$$v= e^{-itP_0}  \chi u_0 + i \int_{s=0}^t e^{-i (t-s) P_0}  [ P, {\chi_1}] u (s) ds.
$$ 
The first term is estimated by applying~\eqref{boha1}. The idea to estimate the non homogeneous term  is to use for $e^{-i (t-s) P_0}$
 the estimate~\eqref{boha1} and for $  [ P, \chi] u (s)$ the estimate~\eqref{estiquar}.

 An issue here is the loss of one derivative due
 to the fact that the operator $P$ is of order $2$ (and hence the bracket is of order $1$). To gain back this loss of one derivative, we have to estimate carefully the contributions of the regions $|t-s| \leq 1/2$ and $|s|\leq 1/2$.  Let us first estimate the (easy) part when  both $|s|$, and $|t-s|$  are larger than $1/2$. In this case the loss of derivatives can be recovered by using the slack of $\frac d 2 \geq 1$ derivatives in~\eqref{boha1}, while the $k$ derivatives from the $H^k$ norm are estimated using~\eqref{estiter}, \eqref{estiquar} (according whether $k> 0$ or $k<0$), giving 
\begin{equation}
\label{estim} \| \langle x \rangle ^{- \frac d 2 } v \| _{H^k} \leq C \int_{s=0}^t \frac {1} {1+ |t-s|^{\frac d 2- \epsilon}} \frac 1 {1+ |s| ^{\frac d 2- \epsilon}}\| u_0 \|_{L^2}
\leq  \frac{C} {1+ |t|^{\frac d 2 - 2\epsilon}} \| u_0 \|_{L^2}.
\end{equation}

 Let us now focus on the contribution of $|t-s|\leq 1/2$. In this case we have $|s| \sim |t|$ and we bound the non homogeneous part by 
  \begin{multline}
  \int_{|t-s| < 1/2} \| \langle x\rangle^{-\frac d 2} e^{-i (t-s) P_0} { \chi} \left\|_{\mathcal{ L} ( H^{k -1 + \epsilon} ; H^k)} \right\| [P, \chi_1] \|_{\mathcal{ L} ( H^{k  + \epsilon} ; H^{k-1 + \epsilon})} \left\| \chi e^{-is P} \chi \right\|_{\mathcal{ L} (L^2;  H^{k + \epsilon}) } ds
 \\
 \leq C  \left\| \langle x\rangle^{-\frac d 2} e^{-i t P_0} { \chi} \right\|_{\mathcal{ L} ( H^{k -1 + \epsilon} ; L_1( 0,1); H^k)} 
  \sup_{|s-t|< 1/2} \left\| \chi e^{-is P} \chi \right\|_{\mathcal{ L} (L^2;  H^{k + \epsilon}) } 
  \end{multline} 
  and we use Corollary~\ref{cor.3.1} and ~\eqref{estiter} or~\eqref{estiquar} to conclude.
  
Let us now deal with the contribution of the region $|s| \leq 1/2$.  We proceed similarly and get a bound (with $k+ \epsilon < d/2$)
 \begin{multline}
  \int_{|s| < 1/2} \left\| \langle x\rangle^{-\frac d 2} e^{-i (t-s) P_0} { \chi} \right\|_{\mathcal{ L} ( H^{ - \epsilon };  H^{k})} \| [P, \chi_1] \|_{\mathcal{ L} ( H^{1-\epsilon} ; H^{ - \epsilon})} \left\| \chi e^{-is P} \chi \right\|_{\mathcal{ L} (L^2;  H^{1- \epsilon}) } ds
 \\
 \leq C  \sup_ {|\sigma -t|<1/2}\left\| \langle x\rangle^{-\frac d 2} e^{-i \sigma P_0} { \chi} \right\|_{\mathcal{ L} ( H^{- \epsilon };  H^{k})}
 \left\| \chi e^{-is P} \chi \right\|_{\mathcal{ L} (L^2;  L^1(0,1/2); H^{1-\epsilon}) }, 
  \end{multline} 
 and we use~\eqref{boha1} to estimate the first term in the r.h.s  and Proposition~\ref{prop.3.2} for the second term.

As a consequence, we just proved that in~\eqref{estiter}, \eqref{estiquar} we can put in the left $\langle x \rangle^{-\frac d 2 }$ instead of $\chi(x)$.
 By duality we hence also have a similar estimate with $\langle x \rangle^{-\frac d 2 }$ instead of $\chi(x)$ in the right (and now $\chi(x) $ in the left).
 We now re-perform the same procedure which allows to replace again $\chi(x) $ in the left by $\langle x \rangle^{-\frac d 2}$. This ends the proof of the insertion of the proper weight in ~\eqref{estiter},~\eqref{estiquar}.

\section{Proofs of theorem \ref{t2}}\label{sec.3}

\subsection{Proof of theorem \ref{t2}}
Let $\chi\in C^\infty_0$ equal to $1$ on $B(0, R_1)$. Theorem~\ref{t2} amounts to proving that for any $k>0$ there exists $C>0$ such that for any $u_0$ satisfying $u_0 = \chi u_0$, one has 
$$\| \chi e^{itP} u_0\| \leq \frac{C} { \log(t)^{2k} }\|( P+i)^{k/2} u_0\|_{L^2}.$$
Since the $L^2$ norm is conserved by the evolution $e^{itP}$, it is by interpolation clearly enough to prove the result for even integers $k$.  In that case, if $\widetilde{\chi}$ is equal to $1$ on the support of $\chi$, we have 
$$v_0 = ( P+i)^{k/2} u_0= ( P+i)^{k/2} \chi u_0 = \widetilde{\chi} ( P+i)^{k/2} \chi u_0 = \widetilde{\chi}  v_0$$
and Theorem ~\ref{t2} amounts to proving 
\begin{theo}
\label{t3}
For any even integer $k>1$ there exists a constant $C_k>0$ such that for any $t>1$
\begin{equation}
\left\|\chi(x)\ \frac{e^{-itP}}{(1-iP)^k} \chi(x)\right\|_{{\mathcal L}({\mathcal H})}
\leq \frac{C_k}{(\log t)^{2k}}.
\label{e9}
\end{equation}
\end{theo}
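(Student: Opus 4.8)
The plan is to pass from the resolvent estimate of Theorem~\ref{t1} to the time-decay estimate \eqref{e9} via a contour-shift (Fourier--Laplace) argument, exactly in the spirit of \cite{B}. First I would write, for $u_0 = \widetilde\chi u_0$ and $t>1$,
$$ \chi e^{-itP} (1-iP)^{-k} \chi u_0 = \frac{1}{2\pi i} \int_{\Ima z = -\delta} e^{-itz} \chi \, R(z) \, \chi u_0 \; \frac{dz}{(1-iz)^k}, $$
starting on a horizontal line in the physical half-plane $\Ima z < 0$ where $R(z)$ is holomorphic and the integral converges (after regularizing, since $u_0 \in \mathcal H$ only; one uses the extra smoothing from $(1-iz)^{-k}$ and the fact that $\chi$ is compactly supported, or truncates in frequency and passes to the limit). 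The spectral theorem identifies this with $\chi e^{-itP}(1-iP)^{-k}\chi u_0$.

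The core step is to deform the contour into the region $\mathcal V$ of \eqref{e1} where, by Theorem~\ref{t1} (and since $\chi \langle x\rangle$ is bounded, so $\chi R(z)\chi$ is controlled by $e^{-\langle x\rangle} R(z) e^{-\langle x\rangle}$ up to constants), one has $\|\chi R(z)\chi\| \le C e^{C_3\sqrt{|\Rea z|}}$. I would push the contour down to the curve $\Ima z = -\epsilon_0 e^{-C_2\sqrt{|\Rea z|}}$ for $|\Rea z|$ large, joining it to a bounded piece near the origin where $R(z)$ has at worst finitely many poles on $\Ima z = 0$ handled separately (or absent, again by Theorem~\ref{t1} for $|z|>C_1$, and by self-adjointness $P\ge 0$ there are no poles with $\Ima z<0$; the low-frequency part is bounded and contributes only to the constant). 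On the shifted contour $e^{-itz} = e^{-it\Rea z} e^{t\,\Ima z}$ so $|e^{-itz}| \le e^{-t\epsilon_0 e^{-C_2\sqrt{|\Rea z|}}}$, which combined with the resolvent bound gives an integrand bounded by
$$ C\, e^{-t\epsilon_0 e^{-C_2\sqrt{|\Rea z|}}}\, e^{C_3\sqrt{|\Rea z|}}\, \langle z\rangle^{-k}. $$

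It then remains to estimate $\int_{-\infty}^{\infty} e^{-t\epsilon_0 e^{-C_2\sqrt{|\lambda|}}} e^{C_3\sqrt{|\lambda|}} \langle\lambda\rangle^{-k}\, d\lambda$. Substituting $\lambda = s^2$ (so $d\lambda = 2s\,ds$) turns this into an integral of the shape $\int_1^\infty e^{-t\epsilon_0 e^{-C_2 s}} e^{C_3 s}\, s^{1-2k}\, ds$; the exponential kills the contribution of $s \gtrsim \frac{1}{C_2}\log(t)$ very efficiently (the integrand is $O(t^{-\infty})$ there after absorbing $e^{C_3 s}$), while for $s \lesssim \frac{1}{C_2}\log t$ the polynomial factor $s^{1-2k}$ is bounded by $C(\log t)^{1-2k}$ and the remaining integral is $O(1)$ — actually one gets the slightly better $(\log t)^{-2k}$ by being careful, e.g. splitting at $s_0 = \frac{1}{2C_2}\log t$ and noting $\int_{s_0}^\infty e^{-t\epsilon_0 e^{-C_2 s}} e^{C_3 s} s^{1-2k}ds$ and $\int_1^{s_0}(\dots)$ are both $O((\log t)^{-2k})$ since on $[1,s_0]$ one has $s^{-2k}\le 1$ but the factor $e^{C_3 s}$ is only $O(t^{C_3/(2C_2)})$, so one instead optimizes the cut at $s_0$ where $e^{C_3 s_0} = (\log t)$ — this standard Phragmén--Lindelöf/optimization bookkeeping is the one genuinely delicate point. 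The main obstacle, and the step I would expect to spend the most care on, is precisely this: justifying the contour deformation rigorously (convergence at infinity along the tilted contour, absence of poles, and the regularization needed because $u_0$ is merely $L^2$), and then extracting the sharp power $(\log t)^{-2k}$ rather than a worse power of $\log t$ from the competition between $e^{C_3\sqrt{|\Rea z|}}$ and the super-exponentially small $e^{t\,\Ima z}$.
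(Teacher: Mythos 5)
Your proposal breaks down at the very step you flag as ``the one genuinely delicate point'': the direct contour shift produces a \emph{divergent} integral, and no optimization or bookkeeping can repair it. On the deformed curve $\Ima z = -\epsilon_0 e^{-C_2\sqrt{|\Rea z|}}$ the protecting factor $|e^{-itz}| = e^{-t\epsilon_0 e^{-C_2\sqrt{|\Rea z|}}}$ tends to $1$ as $|\Rea z|\to\infty$, since the exponent $t\epsilon_0 e^{-C_2\sqrt{|\Rea z|}}$ goes to $0$ there. Your claim that the exponential ``kills the contribution of $s\gtrsim\frac1{C_2}\log t$'' is exactly backwards: $e^{-t\epsilon_0 e^{-C_2 s}}$ is super-polynomially small for $s\lesssim\frac{1}{2C_2}\log t$ but of order $1$ for $s\gtrsim\frac1{C_2}\log t$. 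Combined with the genuinely growing resolvent bound $e^{C_3\sqrt{|\Rea z|}}$, the integrand on the tilted contour behaves like $e^{C_3\sqrt{|\Rea z|}}\langle z\rangle^{-k}$ for large $|\Rea z|$, which is not integrable. This divergence is exactly what distinguishes Theorem~\ref{t3} from the $N>0$ case of Theorem~\ref{t4}, where the resolvent bound is merely polynomial and the naive contour shift \emph{does} converge.

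The paper's proof hinges on ingredients your sketch omits. It first passes, via a time cut-off $\psi(t)$, to a non-homogeneous equation, so that the Duhamel source is supported in $s\in[1/3,2/3]$ (avoiding a $\delta_{t=0}$ singularity). It then inserts a Gaussian $\int\sqrt{c_0/\pi}\,e^{-c_0(\lambda-\xi/\log t)^2}\,d\lambda=1$ and splits according to $|\lambda|<c_1\log t$ (giving $I_1$) versus $|\lambda|\geq c_1\log t$ (giving $I_2$). In $I_1$ the Gaussian effectively localizes the $\xi$-integral to $|\xi|\lesssim c_2\log^2 t$, and on that bounded range the contour deformation is legitimate and convergent: with $C_2\sqrt{c_2}<1$ the protecting factor is $\leq e^{-\epsilon_0 (t-1)t^{-C_2\sqrt{c_2}}}$, super-polynomially small and dominating the resolvent growth $t^{A\sqrt{c_2}}$, so $I_1=O(t^{-\varepsilon})$. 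For $I_2$ there is no contour shift at all; one uses Plancherel in the time variable and exploits that the Gaussian forces $|\xi|\gtrsim\log^2 t$, so that $|(1-i\xi)^{-k}|\lesssim(\log t)^{-2k}$. It is precisely this $\log^2 t$ threshold --- a consequence of the $\sqrt{\cdot}$ in the exponent of Theorem~\ref{t1} --- that yields the rate $(\log t)^{-2k}$ of \eqref{e9} rather than the weaker $(\log t)^{-k}$ of \eqref{e9bis}; it cannot be recovered by sharpening the single-integral estimate you propose.
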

 \subsection{Proof of Theorem \ref{t3}}\label{se.3.3}

Let us notice first that an application of ~\cite[Th\'eor\`eme 3]{B}, using only  the weaker estimate 
\begin{equation}
{\displaystyle \|\chi(x) R(z)\chi(x)\|_{{\mathcal L}({\mathcal H},{\mathcal H})}
\leq C \ e^{C_3|\Rea z|},}
\end{equation} 
in the domain
\begin{equation}
\widetilde{{\mathcal V}}:=\left\{z\in {\mathbb C}\ :\ |\Ima z|<\epsilon_0 e^{-C_2|\Rea z|}\right\}\cap \{|z|>C_1\},
\label{e1bis}
\end{equation}
  implies directly (at least in the high frequency regime), see~\eqref{e9}
\begin{equation}
\left\|\chi(x)\ \frac{e^{-itP}}{(1-iP)^k}\chi(x)\right\|_{{\mathcal L}({\mathcal H})}
\leq \frac{C_k}{(\log t)^{k}}.
\label{e9bis}
\end{equation}

To get the proper decay rate  ($\frac{C_k}{(\log t)^{2k}}$), we shall use as in~\cite{B} a time cut-off, together with a
 complex deformation of the contour in the $\xi=z^2$-variable which is adapted to  Schr\"odinger operators. 
 
 In the rest of this section we shall prove Theorem~\ref{t3}.  We first deal with the low frequencies. According to~\cite[Section B.2]{B} we have 
 \begin{theo} 
 For any $\theta \in [0, \frac \pi 4[$ there exist $\delta >0$ and $r, C>0$ such that the resolvent 
 $R(z)$ admits an analytic expansion in the sector 
 $$\{ z= \rho e^{i \phi}\in \mathbb{C}; 0< \rho <r, \phi \in [ - \pi - \theta, \theta]\},$$ 
 and is uniformly bounded in this sector. 
 \end{theo}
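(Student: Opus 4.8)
\emph{Sketch of proof.} The plan is to pass to the spectral variable $\zeta=\sqrt z$, so that $\widetilde R(\zeta)=(\zeta^2-P)^{-1}$ and the sector $\{z=\rho e^{i\phi}:0<\rho<r,\ \phi\in[-\pi-\theta,\theta]\}$ becomes, on the appropriate cover, a $\zeta$-sector of angular width $\tfrac\pi2+\theta<\tfrac{3\pi}4$ which meets the physical region $\{\Ima\zeta<0\}$ where $\widetilde R(\zeta)$ is bounded on $\mathcal H$. Since $\langle x\rangle^{s}\chi$ is bounded for $\chi\in C_0^\infty$, it is enough to produce the analytic continuation and the uniform bound for the weighted resolvent $\langle x\rangle^{-s}\widetilde R(\zeta)\langle x\rangle^{-s}$ with $s\in(\tfrac12,\tfrac d2)$. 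First I would record the low-frequency behaviour of a model operator $P_\infty$ on $\mathbb R^d$ (no obstacle) whose coefficients agree with those of $P$ for $|x|$ large and which is non-trapping, being a small $C^\infty$ perturbation of $-\Delta$: writing $\langle x\rangle^{-s}\widetilde R_\infty(\zeta)\langle x\rangle^{-s}$, $\zeta^2=\lambda$, as a time integral of $\langle x\rangle^{-s}e^{\mp it P_\infty}\langle x\rangle^{-s}$ and using the integrability of $\langle t\rangle^{-d/2+\epsilon}$ granted by~\eqref{boha1} for $d\geq3$, one gets that $\langle x\rangle^{-s}\widetilde R_\infty(\zeta)\langle x\rangle^{-s}$ extends from the physical region to a punctured neighbourhood of $\zeta=0$, holomorphically away from $0$, with a uniform bound (equivalently, compare with the explicit Hankel kernel of $(-\Delta-\zeta^2)^{-1}$ and perturb; for $d$ odd the extension is genuinely analytic across $\zeta=0$, for $d\geq4$ even it carries only harmless $\zeta^{2j}\log\zeta$ corrections).

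The second step is a standard glueing parametrix. Let $B\supset\overline\Theta$ be a large ball and $\widetilde R_B(\zeta)$ the Dirichlet resolvent of $P$ on $\Omega\cap B$, which is holomorphic and bounded near $\zeta=0$ because the first Dirichlet eigenvalue is positive. Choose cut-offs $\psi_1+\psi_2=1$ with $\psi_1\in C_0^\infty$ equal to $1$ near $\Theta$ and $\psi_2=1$ near infinity, and $\phi_j=1$ on $\operatorname{supp}\psi_j$, so that $P=P_B$ on $\operatorname{supp}\phi_1$ and $P=P_\infty$ on $\operatorname{supp}\phi_2$; set $Q(\zeta)=\phi_1\widetilde R_B(\zeta)\psi_1+\phi_2\widetilde R_\infty(\zeta)\psi_2$. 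A direct computation gives $(\zeta^2-P)Q(\zeta)=\mathrm{Id}-K(\zeta)$ with $K(\zeta)=[P,\phi_1]\widetilde R_B(\zeta)\psi_1+[P,\phi_2]\widetilde R_\infty(\zeta)\psi_2$; each commutator is first order with coefficients supported in a fixed compact set disjoint from $\Theta$ and from infinity, so $K(\zeta)$ is compact on the relevant weighted $L^2$ space (Rellich) and holomorphic near $\zeta=0$. By the analytic Fredholm theorem $(\mathrm{Id}-K(\zeta))^{-1}$ is meromorphic near $0$ and holomorphic in the physical region, where it identifies $Q(\zeta)(\mathrm{Id}-K(\zeta))^{-1}$ with $\widetilde R(\zeta)$; hence $\widetilde R(\zeta)$ continues meromorphically near $\zeta=0$ with at worst a pole at the origin.

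The heart of the matter — and the step I expect to cost the most work — is to exclude that pole. If $\widetilde R(\zeta)$ had a pole of order $m\geq1$ at $0$, its leading Laurent coefficient $A_m$ would be of finite rank ($A_m=Q(0)$ composed with the residue of the Fredholm inverse) and would satisfy $PA_m=0$ (expand $(\zeta^2-P)\widetilde R(\zeta)=\mathrm{Id}$); thus every $u$ in the range of $A_m$ solves $Pu=0$ in $\Omega$ with $u|_{\partial\Omega}=0$, lies in the weighted $L^2$ space, and — since $P_\infty u=0$ for $|x|$ large and the constant term of the multipole expansion is excluded by $s<d/2$ — satisfies $u=O(|x|^{2-d})$, $\nabla u=O(|x|^{1-d})$ at infinity. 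The energy identity
\[
\int_\Omega\langle A\nabla u,\nabla u\rangle\,dx=\int_{\partial\Omega}\langle A\nabla u,\nu\rangle\,\overline u\,d\sigma+\lim_{R\to\infty}\int_{|x|=R}\Bigl\langle A\nabla u,\tfrac x{|x|}\Bigr\rangle\,\overline u\,d\sigma
\]
then has vanishing right-hand side (the $\partial\Omega$ term because $u=0$ there, the term at infinity because it is $O(R^{2-d})\to0$ for $d\geq3$), so the ellipticity~\eqref{ellip} forces $\nabla u\equiv0$ and hence $u\equiv0$; therefore $A_m=0$, a contradiction. Consequently $\langle x\rangle^{-s}\widetilde R(\zeta)\langle x\rangle^{-s}$ is holomorphic and uniformly bounded for $|\zeta|$ small, and returning to $z=\zeta^2$ on the sector $\phi\in[-\pi-\theta,\theta]$ — which stays inside this region and does not encircle $0$ — gives the statement. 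The points to watch are the precise decay at infinity of a hypothetical resonant state (where the parametrix structure and the model analysis of the first step enter), and the low-dimensional bookkeeping: the case $d=2$ requires treating the model directly and controlling the boundary term at infinity by a capacity argument specific to the exterior Dirichlet problem.
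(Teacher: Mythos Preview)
The paper does not actually prove this statement: it simply invokes \cite[Section~B.2]{B}. It does, however, prove the closely related Proposition~\ref{pv} (under the stronger hypothesis (E) that the metric is exactly euclidean outside a ball), and that proof follows precisely the Vainberg-type template you outline: a two-piece parametrix $G(z)=\beta_1 R_a(z_0)\alpha_1+\beta_2 R_0(z)\alpha_2$ built from an interior Dirichlet resolvent and the free resolvent, compactness of the remainder $S(z)=(z-P)G(z)-I$, analytic Fredholm, and finally a separate argument excluding a singularity at $z=0$ by showing $G(0)$ is injective. Your sketch is therefore essentially the route the paper itself takes for the companion result.

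Two differences are worth recording. First, for the zero-energy step the paper proves injectivity of $G(0)$ by repeated applications of the maximum principle to harmonic pieces on annular shells, whereas you kill a putative zero-resonant state with an energy identity and the decay $u=O(|x|^{2-d})$; your argument is shorter and does not rely on the model operator being exactly $-\Delta$. Second, your first step --- producing the analytic continuation of the model resolvent $\widetilde R_\infty(\zeta)$ near $\zeta=0$ --- is immediate under assumption (E) (the model is literally the free Laplacian and the Hankel kernel does everything), but if the metric is only asymptotically flat your time-integral via~\eqref{boha1} gives boundedness up to the real axis, not continuation across it, and ``perturbing from the Hankel kernel'' then hides the same problem one level down. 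The paper itself flags exactly this issue in Remark~\ref{rem.ert}, so the honest reading is that both the cited theorem and your sketch are really justified under (E); neither claims more.
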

Now we start with the classical formula 
\begin{equation}
\label{5.1bis}
e^{-itP}\frac{\chi (P)}{(1-iP)^k}x= \frac{1}{2i\pi}\int _{ \xi =-\infty-i0} ^{+\infty -i0}e ^{it\xi}
\frac{\chi ( \xi)}{\bigl(1-i\xi\bigr)^ k} \frac 1 {\xi +P}d\xi\ x.
\end{equation}
Let $\widetilde{\chi} $ be a quasi-analytic extension of $\chi$ i.e. a function satisfying 
$$  |\partial_{\overline{z}}  \chi (z)| \leq C_N |\Im z| ^N, \qquad \forall N, $$
such that  
$$ \Omega= \text{supp} (  \partial_{\overline{z}}  \chi ) \subset \left\{ x+ iy; |x| \in [\frac r 4 , \frac r 2] , 0\leq y \leq  \frac C 8\right\}.$$

By contour deformation on the contour $\Gamma$ of Figure~\ref{fig.0},
 denoting by $\Gamma_-$ the region situated between $\Gamma$ and the real axis, we get 

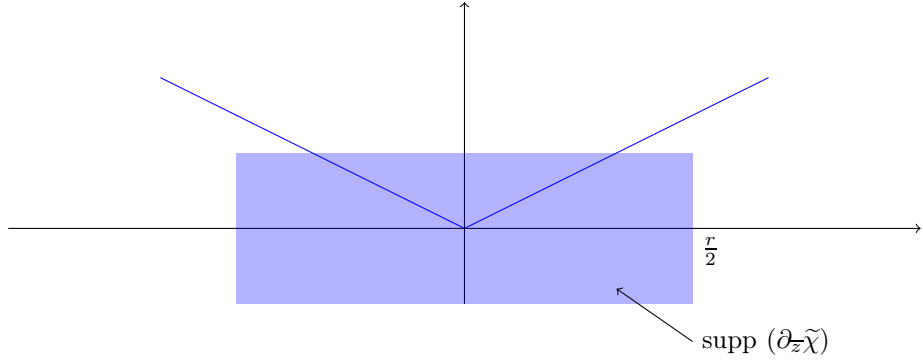
\begin{figure}
 \begin{center}
\begin{tikzpicture}[scale=2]
\draw[blue](-2, 1) -- (0,0)--(2,1); 
\draw[->] (-3,0) -- (3,0);
\draw[->] (0,-0.5) -- (0,1.5);
\draw (1.5, -.15) node[right] {$\frac {r } 2 $};
\fill[blue, opacity = .3] (-1.5, -.5) -- (-1.5, 0.5) -- (1.5,0.5) -- (1.5,-.5) -- cycle;
\draw[->] (1.5,-0.75) -- (1,-.4);
\draw (1.5, -.75)  node[right] {supp ($\partial_{\overline{z}}\widetilde{\chi}$)};
\end{tikzpicture}
\end{center}
\caption{A first contour $\Gamma$.}\label{fig.0}
\end{figure} 
\begin{equation}
\label{5.1ter}
e^{-itP}\frac{\chi (P)}{(1-iP)^k}x
= \frac{1}{2i\pi}\int _{ \Gamma}e^{-it \xi}\frac{1}{2i\pi}\frac{ \widetilde{\chi} (\xi)}{\bigl(1-i\xi\bigr)^ k} \frac 1 {\xi +P}d\xi\ x 
-  \frac{1}{\pi}\int _{ \Omega\cap\Gamma_-}e ^{-it\xi}
\frac{\partial_{\overline{z}}\widetilde{\chi}  ( \eta)}{\bigl(1-i\xi\bigr)^ k} \frac 1 {\xi +P}|d\eta|\ x,
\end{equation}
which is estimated in norm by 
$$ C \int_0^ r e^{-t\delta \eta} d\eta+ C_N\int_{ \Omega\cap\Gamma_-} e^{- t \Im (\xi) } \Im (\xi)^N d \xi \leq \frac C t.
$$
It remains to estimate the contribution of $(1- \chi (P))$.

For $ U_0\in {\mathcal H}$ put $ V=
e^{-itP}\chi(x)U_0$. 

We observe that for any  $s\geq 0$
\begin{equation}
\label{5.0}
 \ \| V\bigl(s\bigr)\| \leq \| U_0 \|.
\end{equation}
Using Fourier-Laplace transform, for any  $t>0$, $x\in {\mathcal H}$ we have
\begin{equation}
\label{5.1}
e^{-itP}\frac{1- \chi (P)}{(1-iP)^k}x= \frac{1}{2i\pi}\int _{ \xi =-\infty-i0} ^{+\infty -i0}e ^{it\xi}
\frac{1- \chi ( \xi)}{\bigl(1-i\xi\bigr)^ k} \frac 1 {\xi +P}d\xi\ x.
\end{equation}
The idea of the proof consists in decomposing in~\eqref{5.1} the integral into two parts,  the first one corresponding to "low" frequencies, $|\xi|\leq \log^2 (t)$,
 for which we perform contour deformations (after taking a suitable quasi-analytic extension of $\chi$), using the assumptions on the resolvent, while the other part corresponds to "high" frequencies for which we simply use that $|(1-i\xi )^ {-k}|\leq \log (t)^{-2k} $.

 In order to be able to perform this decomposition and still use contour deformation arguments, we take a convolution
 with a gaussian function and perform the decomposition on the convolution side.

 Furthermore, instead of decomposing
 the Cauchy data (which would introduce a $\delta_{t=0}\notin L^{1}_{\text{loc}}(\mathbb{R}_{t})$ singularity at $0$), it is better to transform the equation into a non-homogeneous one and perform the decomposition on the right hand side. 

Let~$ \psi \in C^{\infty}\bigl(\mathbb{R}_t\bigr)$, equal to 
$ 0 $ for~$ t<1/3$ and to~$ 1$ for~$ t>2/3$,  and let $ U= \frac 1 {(1-iP) ^
  k}\psi V$ be solution of 
\begin{align} \bigl(\partial _t +iP\bigr)U &= \psi '\bigl(t\bigr)
\frac{1- \chi (P)}{(1-iP)^k}V\bigl(t\bigr).\\
\intertext{We have}
U(t)&=\int_{0}^{t} e^{-i(t-s)P}\psi'(s)\frac{1- \chi (P)}{(1-iP)^k}V\bigl(s\bigr) ds.
\label{eq5.789}
\end{align}
\par\par
Let~$ c_0>0$ and~$ c_1>0$ to be chosen later.

 We have
\begin{equation}
\begin{split}
U(t) &=\int _{s=0} ^t ds\int_{\Ima \xi = 0^-}d\xi\psi '(s)
e^{i\bigl(t-s\bigr)\xi}\frac{1- \chi ( \xi)}{(1-i\xi)^k}\frac 1 {\xi +P}V\bigl(s\bigr)\int_{-\infty}
^{+\infty}\sqrt{\frac{c_0}{\pi}}e^{-c_0 \bigl(\lambda-
\frac{\xi}{{\log t}}\bigr)^{2}} d\lambda\\
&= \int_s\int_\xi \int_{|\lambda|<c_1{\log t}}+\int_s\int_\xi
\int_{|\lambda|\geq c_1 {\log t}} \\
&= I_1 + I_2.
\end{split} 
\end{equation}
\subsubsection{Estimating~$I_1$}
\label{se.3.3.1}
To estimate the contribution of the term  $I_{1}$ the idea is to deform the integration contour
 in the ~$\xi$ variable into the contour $\Gamma $ of Figure~\ref{fig.1} defined by 
 $$ \Gamma= \left[-\frac 1 2, \frac 1 2 \right] \cup \left[ - \frac 1 2 , -\frac 1 2 + i \varepsilon e^{-a\sqrt{1/2}}\right]
 \cup \left[ - \frac 1 2 , -\frac 1 2 + i \varepsilon e^{-a\sqrt{1/2}}\right] \cup \widetilde{\Gamma},
 $$ 
 $$ \widetilde{\Gamma} = \left\{ x+\varepsilon e^{-a\sqrt{|x|}}, |x|\geq \frac 1 2\right\}.$$
 
 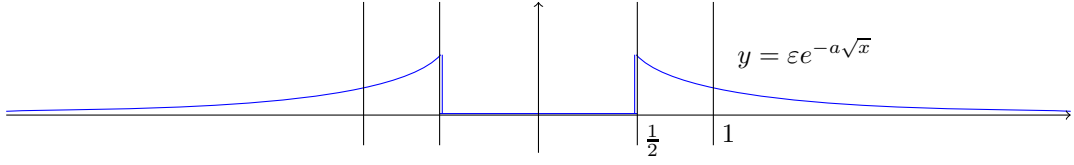
\begin{figure}
 \begin{center}
\begin{tikzpicture}[scale=1]
\draw[blue](-1.298, 0.02) -- (1.298, 0.02); 
\draw[blue](1.265, 0.02) -- (1.265, .8); 
\draw[blue] (1.285,.8).. controls (2,0)and  (6,0.1) .. (7, 0.05);
\draw[blue](-1.265, 0.02) -- (-1.265, .8); 
\draw[blue] (-1.285,.8).. controls (-2,0)and  (-6,0.1) .. (-7, 0.05);
\draw[->] (-7,0) -- (7,0);
\draw[->] (0,-0.5) -- (0,1.5);
\draw (3.5,.5) node[above] {$y=\varepsilon e^{-a\sqrt{x}}$};
\draw (1.3, -.4) -- (1.3, 1.5);
\draw (-1.3, -.4) -- (-1.3, 1.5);
\draw (2.3, -.4) -- (2.3, 1.5);
\draw (-2.3, -.4) -- (-2.3, 1.5);
\draw (1.5, 0) node[below] {$\frac 1 2$};
\draw (2.5, 0) node[below] {$1$};
\end{tikzpicture}
\end{center}
\caption{A second contour $\Gamma$.}\label{fig.1}
\end{figure} To be able to perform such a 
deformation, we first replace the function $\chi$ by a quasi-analytic extension, i.e. a function 
$$ \widetilde{\chi} : z \in \mathbb{C} \mapsto \widetilde{\chi} ( z),$$
which coincides with $\chi$ on $\mathbb{R}$, is supported in $\left\{ \Rea z \in (-1,1)\right\}$, equal to $1$ in $\left\{ \Rea z \in (-\frac 1 2 ,\frac 1 2)\right\}$ and such that 
$$ \forall N \in \mathbb{N}, \exists C_N; \forall z \in \mathbb{C}, |\partial_{\overline{z}} \chi (z) |\leq C_N |\Im z| ^N .$$
Then we have to check that for any $s\in [0,1]$ the operator $\chi(x) R(\xi)
e^{-isP}\chi(x)$ is holomorphic with respect to the $\xi$ variable in the domain below $\Gamma$
 and satisfies (uniformly with respect to $s$) an estimate of the type~\eqref{e2}.

 For $\Ima \xi<0$ the two families of operators 
\begin{equation}
\label{eq853}
e^{is\xi} \bigl( \chi(x)R(\xi)\chi(x)-i \int _{0}^{s} \chi(x)e^{
    -i\sigma(P+\xi)}\chi(x) d\sigma \bigr)\quad \text{and} \quad \chi(x) R(\xi )e^{-isP}\chi(x),
\end{equation}
coincide for  $s=0$, and satisfy the same differential equation
\begin{equation}
\partial _{s} w = i\xi w - i\chi(x) e^{-isP} \chi(x),
\end{equation}
hence, these two families are equal for $\Ima
\xi <0$, and the family on the l.h.s. gives the proper analytic continuation.  As a consequence, it is possible
 in the integral defining  $I_{1}$ to deform the contour into the contour $\Gamma$ (notice that the deformation
 near infinity is justified, for fixed $\lambda$, by the fast decay of the integrand ).

 According to Green's formula (notice that according to the support condition on $\chi$, the only contour contribution is given by $\widetilde{\Gamma}$), we get that 
 $$ I_1 =  \int_s\int_{ \xi\in \widetilde{\Gamma}} \int_{|\lambda|<c_1{\log t}}+i \int _{s=0} ^t ds\int_{z\in \Omega} \psi '(s)
e^{i\bigl(t-s\bigr)\xi}\frac{\partial_{\overline{z}}  \chi ( z)}{(1-iz)^k}\frac 1 {z +P}V\bigl(s\bigr)\int_{-\infty}
^{+\infty}\sqrt{\frac{c_0}{\pi}}e^{-c_0 \bigl(\lambda-
\frac{z}{{\log t}}\bigr)^{2}} |z| d\lambda,
$$ 
where 
$$ \Omega= \text{supp} (  \partial_{\overline{z}}  \chi ) \subset \left\{ x+ iy; |x| \in \left[\frac 1 2 , 1\right] , 0\leq y \leq  \varepsilon e^{-a\sqrt{|x|}}\right\}.$$

From~\eqref{5.0}, using the fact that the operator~$\chi(x) R(\xi)
e^{-isP}\chi(x)$ is uniformly bounded on  $H$ with respect to ~$ z\in \Omega $ and $s\in [0,1]$ for any~$ t\geq 2$, we get that the contribution of the second term is, for any $K>0$,  bounded by 
\begin{equation}
\label{5.2}
\Big\|\chi(x)\int_s \int_{z \in \Omega}\int_{|\lambda|<c_1
 \log(t)} \Big\| \leq C_N \|U_0\| \int_{z \in \Omega} e
^{-\bigl(t-1\bigr)\Rea z} (\Rea z)^K dz \leq \frac { C_K\|U_0\|}{  (t-1)^{K+1}}.
\end{equation}
From~\eqref{e2}, we also have for any~$ t>1$
\begin{multline}
\label{5.3}
\Bigl\|\chi(x)\int_s\int_{\xi \in \widetilde{\Gamma}} \int_{|\lambda|<c_1
 \log(t)}\Bigr\|\\
\leq  C \sqrt{c_0}\int_{\eta = -\infty}^{+\infty}\int_{|\lambda|<c_1
 \log(t)}e ^{-(t-1)\varepsilon _0 e ^{-a
    |\eta|^{1/2}}+A|\eta|^{1/2}-c_0(\lambda-\eta/{\log t})^2}d\eta\ d\lambda\|U_0\|.
\end{multline}
Choose now~$ c_2>0$ such that~$\sqrt{c_2} a<1$, and
$$ \varphi= -(t-1)\varepsilon _0 e ^{-a
    |\eta|^{1/2}}+A|\eta|^{1/2}-c_0(\lambda-\eta/{\log t})^2.$$ Then if $|\eta|\leq c_{2} \log^2 t$, we also have 
\begin{equation}
\label{5.5}
\varphi \leq \sqrt{c_2} A \log t
-(t-1)\varepsilon_0 t^{-\sqrt{c_2}a}.
\end{equation}
Choose~$ c_1 \in ]0, c_2[$. There exists~$ \delta >0$ such that for any~$|\lambda|<c_1
 \log(t)$, if~$|\eta|> c_2 \log^2 t$ then one has
\begin{align}
(\lambda - \eta
  /{\log t})^2 &\geq \delta (\lambda ^2 + (\eta
  /{\log t})^2),\\
\intertext{then}
\varphi &\leq A|\eta|-
  c_0 \delta (\lambda ^2 + (\eta
  /{\log t})^2).
\end{align}
 Choose finally~$c_0 >\frac{A}{\delta c_2}+1$. There exists ~$\varepsilon>0$ such that
\begin{equation}
\label{5.4}
\int_{|\eta|> c_2 \log t}e ^{A|\eta|^{1/2}-
  c_0 \delta  (\eta
  /\log(t))^2}= {\mathcal {O}}\bigl(e ^ {-\varepsilon \log(t)}\bigr).
\end{equation}
 After~(\ref{5.2}),~(\ref{5.3}),~(\ref{5.5}) and~(\ref{5.4}) we get
\begin{equation}
\label{5.6}
\left\|\chi _1I_1\right\|\leq C t^{-\varepsilon} \|U_0\|.
\end{equation}
\subsubsection{Estimating~$I_2$}
Let
\begin{equation}
\label{5.21}
J(u) = \int_{s=0}^1\iint_{\genfrac{}{}{0pt}{}{\Ima \xi = 0^-}{|\lambda|\geq c_1\log(t)}} 
\psi '\bigl(s\bigr)e^{i\bigl(u-s\bigr)\xi}\frac{1}{(1-i\xi)^k}\frac 1 {\xi+P}V\bigl(s\bigr)\sqrt{\frac{c_0}{\pi}}e^{-c_0
  \bigl(\lambda- \frac{\xi}{\log(t)}\bigr)^{2}}.
\end{equation}
For~$t\geq 1$, we have~$J(t) = I_2 (t)$ and for any~$ u\in \mathbb{R}$,
\begin{equation}
\label{5.20}
(\partial _u +iP)J(u) = \int_{s=0}^1\iint_{\genfrac{}{}{0pt}{}{\Ima \xi = 0^-}{|\lambda|\geq c_1\log(t)}}  
\psi '\bigl(s\bigr)\frac {ie^{i\bigl(u-s\bigr)\xi}}{\bigl(1-i\xi\bigr)^ k}V\bigl(s\bigr)\sqrt{\frac{c_0}{\pi}}e^{-c_0
  \bigl(\lambda- \frac{\xi}{\log(t)}\bigr)^{2}}:= K(u),
\end{equation}
which implies
 \begin{equation}
\label{5.17}
J(t) = e^{-itP} J(0) + \int_{0}^t e^{-i(t-s)P}K(s) ds.
\end{equation}
\par
To estimate the norm of~$ J\bigl(t\bigr)$ in~$ H$, we shall use the fact that 
$ e ^{-isP}$ is a contraction on $H$ for  $s\geq 0$, and we bound separately~$ K\bigl(u\bigr)$ for~$ u\geq 1$,~$ J\bigl(0\bigr)$ and~$
\int_{0}^ 1 \|K\bigl(u\bigr)\| du$.

 Since the r.h.s. in~\eqref{eq5.789} (the analog of $K(u)$) vanishes for  $u>1$ as well as the Cauchy data ($U(0)$), we expect the main
 contribution to come from the third term.\par
For~$u \in [1,t]$, we deform in~\eqref{5.20}  the contour in the ~$\xi$ variable into the contour defined by~$\Ima \xi =\log(t)$,
 which gives according to~\eqref{5.0}  using~$ k>1 $ and 
$\text{supp}(\psi') \subset
[1/3, 2/3]$
\begin{equation}
\begin{split}
\label{5.7}
\|K(u)\|& \leq \int_{-\infty}^{+\infty} e^{-(u-2/3)\log(t)}\frac
1{(1+|\xi|)^k}d \xi \|U_0\|\\
& \leq C_ke^{-\log(t)/3}\|U_0\|.
\end{split}
\end{equation}\par
Now we bound~$J(0)$.

 Let us study for example the contribution to~(\ref{5.21}) 
of the region~$\lambda >c_1\log(t)$.

 We deform the contour into the contour~$\Gamma = \Gamma^+ \cup
\Gamma^-$, with
\begin{equation}
\begin{split}
\Gamma^+ &= \{z = 1+\eta -i\log(t); \ \eta>0\},\\
\Gamma^- &= \{z = 1+\eta -i/2; \ \eta\leq 0\}\cup [1-i/2,
1-i\log(t)].
\end{split}
\end{equation}
For~$\xi\in \Gamma^-$, we have for any~$ s\in
\left[0,1\right]$ and any~$ \lambda \in \left[c_1 \sqrt {\log t}, + \infty\right]$
\begin{equation}
\relax
\Big\|e^{-is \xi}\frac{V\bigl(s\bigr)}{\bigl(1-i
\xi\bigr)^ k}\frac 1 {\xi +P}\sqrt{\frac{c_0}{2\pi}}e^{-c_0
  \bigl(\lambda- \frac{\xi}{\log(t)}\bigr)^{2}}\Big\|\leq
\frac C{(1+|\xi|)^k}e^{-\delta(\lambda^2 +\xi^2/\log^2 t)}\|U_0\| \ \text{($ \delta>0$)}.
\end{equation}
As a consequence we deduce that the contribution of~$\Gamma^-$ to~$ J(0)$ is bounded in norm by
\begin{equation}
\label{5.14}
C\log(t)\int _{\lambda\geq c_1\log(t)} e^{-\delta
  \lambda ^2}\|U_0\|= {\mathcal {O}}\bigl(e^{-\varepsilon \log^2 t}\bigr)\|U_0\|.
\end{equation}
For~$\xi \in \Gamma^ +$ and $s\in [1/3, 2/3]$ we have
\begin{equation}
\relax
\Big\|e^{-is\xi}\frac{V\bigl(s\bigr)}{\bigl(1-i
\xi\bigr)^ k}\frac 1 {\xi +P}\sqrt{\frac{c_0}{\pi}}e^{-c_0
  \bigl(\lambda- \frac{\xi}{\sqrt{\log
        t}}\bigr)^{2}}\Big\|\leq C e^{-\log(t)/3}\frac
1{(1+|\eta|)^k}e^{-
c_0\bigl(\lambda -\frac \eta {\log(t)}\bigr)^2}\|U_0\|,
\end{equation}
hence, using~$k>1$, the contribution of ~$\Gamma^+$ to~$ J(0)$ is bounded in norm by
\begin{equation}
\label{5.15}
Ce^{-\log(t)/3}\|U_0\|.
\end{equation}
The contributions to ~$ J\bigl(0\bigr)$ of the region~$ \lambda < -c_1\log(t)$ are bounded similarly.
\par
Finally, it remains to bound
\begin{equation}
\label{5.12}
\int_{0}^1 \|K(u)\| du\leq \left(\int_{0}^1 \|K(u)\|^2 du\right)^{1/2}.
\end{equation}
From  Plancherel formula, 
\begin{equation}
\label{5.9}
\begin{split}
\int_{-\infty}^{+\infty} \|K(u)\|^2 du&= C
\int_{-\infty}^{+\infty}\Big\| {\frac i {\bigl(1-i
\xi\bigr)^k} \widehat{V\psi '}(\xi)\int_{|\lambda|\geq c_1\log(t)}e^{-c_0
  \bigl(\lambda- \frac{\xi}{{\log
        t}}\bigr)^{2}}d\lambda }\Big\|^2 d\xi\\
&= C\int_{-\infty}^{+\infty}\|H(\xi)\|^2 d \xi,
\end{split}
\end{equation}
where, for any~$|\xi| >\frac {c_1}{2} \log^2 t$
\begin{equation}
\label{5.10}
\begin{split}
\|H(\xi)\|& =\Big\| \int_{|\lambda|\geq c_1\log(t)}\frac
1{\bigl(1-i \xi\bigr)^ k}e^{-c_0
  \bigl(\lambda- \frac{\xi}{{\log
        t}}\bigr)^{2}} d\lambda \widehat{V\psi '}\bigl(\xi\bigr)\Big\|\\
& \leq \frac C {(\log t)^{2k}}
  \|\widehat{V\psi '}(\xi)\|,
\end{split}
\end{equation}
and for~$|\xi| \leq \frac{c_1}{2}\log^2t$
 \begin{equation}
\label{5.11}
\|H(\xi)\| \leq \int _{|\lambda| > c_1\log(t)}e^{-\delta (\lambda^2 + \xi ^2/\log^2 t)}
 d\lambda \|\widehat{V\psi '}(\xi)\|\leq C e^{-\epsilon \log^2 t}\|\widehat{V\psi '}(\xi)\|. 
\end{equation}
Then from~(\ref{5.12}),~(\ref{5.9})~(\ref{5.10}) and~(\ref{5.11}), using
\begin{equation}
\int_{-\infty}^{+\infty} \|\widehat {V\psi '}\bigl(\xi\bigr)\|^ 2 d\xi =
\int_{-\infty}^{+\infty} \|\psi ' V\bigl(s\bigr)\|^ 2 ds\leq C \int_0^1 |\psi'
\bigl(s\bigr)|^2 ds \|U_0\|^ 2,
\end{equation}
(let us recall that $V(s) = e^{is B}\chi(x) U_{0}\Rightarrow \| V(s)\|\leq
\|\chi(x) U_{0}\|\leq C\|U_{0}\|$), we deduce that
\begin{equation}
\label{5.18}
\int_{0}^1 \|K(u)\| du\leq C \bigl(\frac 1 {\bigl(\log t\bigr)^ {2k}} + e
^ {- \varepsilon \log t}\bigr)\|U_0 \|.
\end{equation}
From~(\ref{5.17}),~(\ref{5.7})~(\ref{5.14}),~(\ref{5.15}) and~(\ref{5.18}), we obtain finally 
\begin{equation*}
\left\|I_2\right\|\leq \frac C{(\log t)^{2}}\|U_0 \|,
\end{equation*}
which ends the proof of Theorem~\ref{t3}.

\section{The case of a weakly trapping obstacle. Proof of Theorem~\ref{t4}}

Theorem~\ref{t4} exhibits two quite different regimes: 
\begin{enumerate}
\item the first one ($-\frac 1 2 \leq N <0$) corresponds to non trapping ($N=- 1/2$) or weakly trapping ($-\frac 1 2 <N<0$)
 situations for which the (space truncated) Schr\"odinger evolution is smoothing and consequently we gain derivatives,
\item the second one ($0<N$) corresponds to stronger trapping, for which this smoothing effect is no more true.
\end{enumerate} 
We shall first focus on this second regime $N>0$ and give the proof of~\eqref{esti} under the  assumption (E).

\subsection{Analytic properties of the resolvent}
%In this section we assume that the metric is euclidean near infinity (E). 

Adapting a result of Tsutsumi (see Lemma 2.3 in \cite{T1}) to the euclidean case (E) with $\varrho(x)=1$ and $a_{ij}(x)=\delta_{ij}$ for $|x|>R$, we have
%Recall the following result of Tsutsumi \cite{T1} (following ideas of Vainberg \cite{V} \cite{V2}).
\begin{prop}
\label{pv}
%Let $R>a$ and $d\geq 3$.
Let $R>a$, $d\geq 3$ and $H^2_e(\Omega)$ the weighted space $H^2_e(\Omega):=\{u: e^{-|x|^2}u\in H^2(\Omega)\}$
 There exist a positive number $K$ such that
\begin{enumerate}
\item If $d$ is odd, 
\begin{equation}
{\mathcal R}(z)=\sum_{j=0}^{\infty} B_{2j} z^{2j}+\sum_{j=\frac{d-3}{2}}^{\infty} B_{2j+1} z^{2j+1},
\label{e7}
\end{equation}
in the region ${\mathcal W}=\{ z\in {\mathcal O}\ ;\ |z|\leq K\}$, where the operators $B_j$, $j=0,1,2,...$ are bounded from $L^2_R(\Omega)$
to $H^2_e(\Omega)$ and the expansions converge uniformly in operator norm.
\item If $d$ is even, 
\begin{equation}
{\mathcal R}(z)=\sum_{m=0}^{\infty} \sum_{j=0}^{\infty} B_{m,j}\bigl(z^{d-2}\log z\bigr)^m z^{2j},
\label{e8}
\end{equation}
in the region ${\mathcal W}=\{ z\in {\mathcal O}\ ;\ |z|\leq K,\ -\frac{\pi}{2}<\mbox{Arg z}<\frac{3\pi}{2}\}$,  where the operators $B_{mj}$, $m,j=0,1,2,...$
are bounded from  $L^2_R(\Omega)$
in $H^2_e(\Omega)$ and the expansions also converge uniformly in operator norm.
\end{enumerate}

\end{prop}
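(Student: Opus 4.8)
The plan is to follow the parametrix‑and‑gluing argument of Tsutsumi (Lemma~2.3 of \cite{T1}); the only new feature is that the interior piece of the parametrix now involves the variable‑coefficient operator $P$ in place of the flat Laplacian, which --- $P$ being elliptic and self‑adjoint --- affects neither the analyticity of the interior Dirichlet resolvent in $z^2$ nor the final bookkeeping.

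First I would record the low‑energy expansion of the free resolvent $R_0(z)=(-\Delta-z^2)^{-1}$ on $\mathbb{R}^d$, whose Schwartz kernel is $\frac i 4\bigl(\frac z{2\pi|x-y|}\bigr)^{(d-2)/2}H^{(1)}_{(d-2)/2}(z|x-y|)$. Writing $H^{(1)}_\nu=J_\nu+iY_\nu$ with $\nu=\frac{d-2}2$ and inserting the power series of $J_\nu$ and $Y_\nu$: when $d$ is odd, $\nu$ is a half‑integer, $(z/|x-y|)^\nu J_\nu(z|x-y|)=z^{d-2}\times(\text{series in }z^2|x-y|^2)$ and $(z/|x-y|)^\nu Y_\nu(z|x-y|)=(\text{series in }z^2|x-y|^2)\times|x-y|^{2-d}$, so
\[ R_0(z)=\sum_{k\geq 0}z^{2k}A_{2k}+z^{d-2}\sum_{k\geq 0}z^{2k}\widetilde A_{2k}, \]
where $A_{2k}$ has kernel $\sim|x-y|^{2k+2-d}$ and $\widetilde A_{2k}$ has the polynomially bounded kernel $\sim|x-y|^{2k}$; since $d-2=2\cdot\frac{d-3}2+1$, the second sum contributes exactly the odd powers $z^{2j+1}$, $j\geq\frac{d-3}2$. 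When $d$ is even, $\nu$ is an integer, $Y_\nu$ carries a logarithm, and one gets $R_0(z)=a(z)+z^{d-2}(\log z)\,b(z)+z^{d-2}c(z)$ with $a,b,c$ holomorphic in $z^2$ and with kernels of the same (polynomial, resp. $|x-y|^{2-d}$) type, the branch of $\log z$ selecting the sheet $-\frac\pi2<\mathrm{Arg}\,z<\frac{3\pi}2$. In all cases, since the Gaussian weight $e^{-|x|^2}$ absorbs any polynomial growth, each coefficient operator is bounded from $L^2_R(\mathbb{R}^d)$ to $H^2_e(\mathbb{R}^d)$, and the series converge in that norm for $|z|\leq K$ small enough.

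Next I would build the parametrix. Choose cut‑offs $\rho_0,\widetilde\rho_0\in C^\infty_0(\mathbb{R}^d)$ equal to $1$ near $\Theta$, resp. near $\mathrm{supp}\,\rho_0$, set $\rho_\infty=1-\rho_0$, and pick $\widetilde\rho_\infty$ equal to $1$ near $\mathrm{supp}\,\rho_\infty$ and supported in $\{|x|>R\}$, where $P=-\Delta$ (possible if $\rho_0=1$ on a large enough ball). Let $\mathcal R_D(z)=(P-z^2)^{-1}$ be the Dirichlet resolvent of $P$ on a bounded domain $\Omega\cap B(0,R')$ containing $\mathrm{supp}\,\widetilde\rho_0$; its spectrum being discrete and positive, $\mathcal R_D(z)$ is holomorphic in $z^2$ near $z=0$. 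With $E(z)=\widetilde\rho_0\,\mathcal R_D(z)\,\rho_0+\widetilde\rho_\infty\,R_0(z)\,\rho_\infty$ one computes $(P-z^2)E(z)=\mathrm{Id}+K(z)$, where $K(z)=[P,\widetilde\rho_0]\mathcal R_D(z)\rho_0+[-\Delta,\widetilde\rho_\infty]R_0(z)\rho_\infty$ is a compactly supported, $L^2$‑compact operator that inherits the expansions above (only even powers of $z$ from the first term, even powers plus the $z^{d-2}(\log z)$ structure from the second). By analytic Fredholm theory $(\mathrm{Id}+K(z))^{-1}$ is meromorphic near $z=0$, and holomorphic there because $\mathrm{Id}+K(0)$ is invertible: if $(\mathrm{Id}+K(0))v=0$ then $PE(0)v=0$, so $E(0)v$ is a $P$‑harmonic function satisfying the Dirichlet condition and of size $O(|x|^{2-d})$, which vanishes by uniqueness since $d\geq 3$; unique continuation for $P$ then forces $\rho_0 v=\rho_\infty v=0$, i.e. $v=0$ (equivalently, there is no zero resonance or eigenvalue for $d\geq 3$). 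Then $\mathcal R(z)=E(z)(\mathrm{Id}+K(z))^{-1}$ --- the right‑hand side is outgoing and solves the equation, hence it is the resolvent --- which provides the claimed continuation near $z=0$.

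Finally I would read off the expansion by propagating the structure through the inversion. For $d$ odd, write $K(z)=K_{\mathrm{ev}}(z)+z^{d-2}K_{\mathrm{rem}}(z)$ with $K_{\mathrm{ev}},K_{\mathrm{rem}}$ series in $z^2$; factoring $\mathrm{Id}+K(z)=(\mathrm{Id}+K_{\mathrm{ev}}(z))\bigl(\mathrm{Id}+z^{d-2}(\mathrm{Id}+K_{\mathrm{ev}}(z))^{-1}K_{\mathrm{rem}}(z)\bigr)$ and Neumann‑expanding the second factor gives $(\mathrm{Id}+K(z))^{-1}=\sum_{n\geq 0}z^{n(d-2)}(\text{series in }z^2)$, i.e. even powers together with odd powers $z^{2j+1}$, $j\geq\frac{d-3}2$; composing with $E(z)$, which has the same structure, preserves it (even$\times$even, even$\times$odd and odd$\times$even stay among the allowed powers, odd$\times$odd gives even powers $\geq z^{2(d-2)}$), which is \eqref{e7}. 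For $d$ even, the same computation shows the inversion generates all powers $(z^{d-2}\log z)^m$, $m\geq 0$, each multiplied by a series in $z^2$, and composing with $E(z)$ yields \eqref{e8}; the mapping properties $B_j,B_{m,j}\colon L^2_R(\Omega)\to H^2_e(\Omega)$ and the uniform convergence follow from those of $E(z)$ and the boundedness of the coefficients of $(\mathrm{Id}+K(z))^{-1}$. I expect the real work to be exactly this bookkeeping --- extracting the precise low‑energy structure of $R_0(z)$ from the Bessel/Hankel expansions (the exact gap $z^{2j+1}$, $j\geq\frac{d-3}2$, in odd dimensions, the linearity in $\log z$ in even dimensions) and verifying that Fredholm inversion preserves the gap while generating exactly the powers $(z^{d-2}\log z)^m$; checking the absence of a zero resonance for $d\geq 3$ and the mapping into the Gaussian‑weighted space $H^2_e$ is routine.
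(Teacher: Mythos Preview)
Your proposal is correct and follows essentially the same Vainberg--Tsutsumi parametrix scheme as the paper: build an approximate resolvent by gluing the free resolvent at infinity to an interior Dirichlet resolvent, observe that the error is compact and inherits the low-energy structure of $R_0(z)$, invert by Fredholm plus Neumann series, and read off the expansions. Two minor differences are worth noting. First, the paper freezes the interior piece at a fixed $z_0$ (its $R_a(z_0)$), whereas you let it depend on $z$ through $\mathcal R_D(z)$; both are holomorphic in $z^2$ near $0$, so this is cosmetic. Second, for the key step---injectivity of $\mathrm{Id}+K(0)$---the paper argues directly on the parametrix via the maximum principle in the overlap annulus, while you invoke the absence of a zero resonance ($E(0)v$ is $P$-harmonic, Dirichlet, and $O(|x|^{2-d})$, hence zero) followed by unique continuation to recover $v=0$. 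Your route is the more standard modern phrasing; the paper's is more hands-on but reaches the same conclusion. Your bookkeeping for how the Neumann inversion preserves the gap in odd dimensions and generates the $(z^{d-2}\log z)^m$ powers in even dimensions is in fact more explicit than what the paper writes out.
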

\vskip0.25cm 
{\bf Proof:}
After \cite{V} we define the operator $G(z)\in \mbox{Hom}(L^2_R(\Omega),H^2_e(\Omega))$ by
\begin{equation}
G(z)w:=\beta_1 R_a(z_0)(\alpha_1 w)+\beta_2 R_0(z)(\alpha_2 w),
\label{pv1}
\end{equation}
where $\alpha_{1,2}$ and $\beta_{1,2}$ are two $C^{\infty}({\mathbb R}^d)$ cut off such that
\[
\alpha_1(x)=
\left|  \begin{array}{ll}
{\displaystyle 0\ \ \ \mbox{if}\ |x|<R+1/2},\\\\
{\displaystyle 1\ \ \ \mbox{if}\ |x|>R+2/3},
 \end{array}
\right.
\]
$\alpha_2(x)=1-\alpha_1(x)$,
\[
\beta_1(x)=
\left|  \begin{array}{ll}
{\displaystyle 1\ \ \ \mbox{if}\ |x|<R+2/3},\\\
{\displaystyle 1\ \ \ \mbox{if}\ |x|>R+1},
 \end{array}
\right.
\]
and
\[
\beta_2(x)=
\left|  \begin{array}{ll}
{\displaystyle 0\ \ \ \mbox{if}\ |x|<R},\\\
{\displaystyle 1\ \ \ \mbox{if}\ |x|>R+1/3}.
 \end{array}
\right.
\]
Consider
\begin{equation}
S(z):=(z-P)G(z)-I,
\label{pv2}
\end{equation}
where $I$ is the identity operator.

 After \cite{V} we know that $S(z)$ is compact from $L^2_R(\Omega)$ to $L^2_R(\Omega)$ for any $z$. Moreover
$(z-P)^{-1}S(z)=G(z)(I+S(z))^{-1}$, where $z\to (I+S(z))^{-1}$ is meromorphic.

 Now as we know \cite{V} that expansions (\ref{e7}) and (\ref{e8}) are valid
 for the free operator $R_0(z)(\alpha_2\ \cdot)\equiv (\Delta+z^2)^{-1}$, it follows that they also hold for $G(z)$ and $S(z)$.

 As $I+S(0)=\Delta G(0)$ in $\Omega_R$ we see that $G(0)w=R_0(0)w\to 0$ for $|x|\to \infty$. Moreover as the solution
of
\[
\left\{ 
 \begin{array}{lll}
{\displaystyle P_0 u=0\ \ \ \mbox{in}\ \Omega},\\\
{\displaystyle \left. u\right|_{\partial\Omega}=0},\\\
{\displaystyle u\to 0\ \ \ \mbox{as}\ |x|\to\infty}.
 \end{array}
\right.
\]
is $u\equiv 0$, we get from Fredholm theory that a necessary and sufficient condition for $I+S(0)$ to have a bounded inverse is that $G(0)$ is one to one.

Applying now formally the Neumann expansion
\[
(I+S(k))^{-1}=\sum_{j\geq 0}(-1)^k \left[(I+S(0))^{-1}(S(z)-S(0))\right]^k(I+S(0))^{-1},
\]
near $z=0$, we conclude that ${\mathcal R}(z)$ satisfies (\ref{e7}) and (\ref{e8}), if and only if $G(0)$ is one to one.

In order to show that suppose that $w\in L^2_R(\Omega)$ satisfying $G(0)w=0$. From the previous definitions of $\alpha_{1,2}$ and $\beta_{1,2}$ we have
\[
\left\{ 
 \begin{array}{lll}
{\displaystyle R_a(z_0)(\alpha_1w)=0\ \ \ \mbox{for}\ |x|<R},\\\
{\displaystyle R_0(0)(\alpha_2w)=0\ \ \ \mbox{for}\ |x|>R+1},
 \end{array}
\right.
\]
so
\[
\left\{ 
 \begin{array}{lll}
{\displaystyle w(x)=\alpha_1(x) w(x)=(P-z_0)R_a(z_0)(\alpha_1w)=0\ \ \ \mbox{for}\ |x|<R},\\\
{\displaystyle w(x)=\alpha_2(x) w(x)=P_0 R_0(0)(\alpha_2w)=0\ \ \ \mbox{for}\ |x|>R+1}.
 \end{array}
\right.
\]
Using the fact that
\begin{equation}
R_a(z_0)(\alpha_1 w)+ R_0(0)(\alpha_2 w)=0\ \ \ \mbox{for}\ R+1/3<|x|<R+2/3,
\label{pv3}
\end{equation}
we get
\begin{equation}
(P_0+\zeta_1(x))R_a(z_0)(\alpha_1 w)=0\ \ \ \mbox{for}\ R+1/3<|x|<a,
\label{pv4}
\end{equation}
\begin{equation}
R_a(z_0)(\alpha_1 w)=0\ \ \ \mbox{for}\ |x|=a,
\label{pv5}
\end{equation}
and
\begin{equation}
(P_0+\zeta_2(x))R_0(0)(\alpha_2 w)=0\ \ \ \mbox{for}\ |x|<R+2/3,
\label{pv6}
\end{equation}
where
\begin{equation}
\zeta_1(x)=
\left|  \begin{array}{ll}
{\displaystyle -z_0\ \ \ \mbox{if}\ R+1/2<|x|\leq a},\\\
{\displaystyle 0\ \ \ \mbox{if}\ R+1/3<|x|<R+1/2},
 \end{array}
\right.
\label{pv7}
\end{equation}
and
\begin{equation}
\zeta_2(x)=
\left|  \begin{array}{ll}
{\displaystyle 0\ \ \ \mbox{if}\ |x|<R+1/2},\\\
{\displaystyle -z_0\ \ \ \mbox{if}\ R+1/2<|x|<R+2/3}.
 \end{array}
\right.
\label{pv8}
\end{equation}
From (\ref{pv4}) and (\ref{pv6}) we see that $R_a(z_0)(\alpha_1 w)$ is H\"older continuous for $ R+1/3<|x|\leq a$, and that
$R_0(0)(\alpha_2 w)$ is H\"older continuous for $|x|\leq R+2/3$.
Therefore after (\ref{pv3})  $R_a(z_0)(\alpha_1 w)$ and $R_0(0)(\alpha_2 w)$ are also H\"older continuous for $|x|\leq a$.

Applying the maximum principle \cite{GT}, we get
\[
\max \{|R_a(z_0)(\alpha_1 w)|;|x|=R+1/3\}\geq \max \{|R_a(z_0)(\alpha_1 w)|;|x|=R+2/3\},
\]
and
\[
\max \{|R_0(0)(\alpha_2 w)|;|x|=R+2/3\}\geq \max \{|R_0(0)(\alpha_2 w)|;|x|=R+1/3\}.
\]
But from (\ref{pv3}) it follows that in fact
\[
\max \{|R_a(z_0)(\alpha_1 w)|;|x|=R+1/3\}= \max \{|R_a(z_0)(\alpha_1 w)|;|x|=R+2/3\},
\]
then applying the maximum principle to $R_0(0)(\alpha_2 w)$ in $\{ R+1/3<|x|\leq a\}$ and using (\ref{pv5}) we obtain
\[
R_a(z_0)(\alpha_1 w)=0\ \ \ \mbox{if}\ R+1/3<|x|<a.
\]
Finally using equation $G(0)w=0$ together with (\ref{pv8}), we get
\[
R_0(0)(\alpha_2 w)=0\ \ \ \mbox{in}\ {\mathbb R}^d,
\]
and finally that
\[
R_a(z_0)(\alpha_1 w)=0\ \ \ \mbox{in}\ \Omega_R,
\]
so we conclude that $w$ vanishes identically and so $G(0)$ is actually one to one, which ends the proof.$\ \ \ \ \ \ \ \ \ \ \ \ \ \Box$

\subsection{Proof of Theorem~\ref{t4} (case $N>0$)}

In order to prove the decay, we shall use, as previously, a temporal cut-off with a contour deformation in the complex plane. 

For any $ U_0\in {\mathcal H}$, let 
$$ V=
 \frac 1 {(1-iP)^k} e^{-itP}\chi(x)U_0.
$$
As previously \eqref{5.0} and~\eqref{5.1} are still valid.

 We start from~\eqref{eq5.789}, and write \begin{equation}
\begin{split}
\chi(x) U(t) &=\chi(x)\int _{s=0} ^t ds\int_{\Ima \xi = 0^-}d\xi\psi '(s)
e^{i\bigl(t-s\bigr)\xi}\frac{1}{(1-i\xi)^k}\frac 1 {\xi +P}V\bigl(s\bigr)\\
& \quad \qquad \qquad \qquad \hfill \int_{-\infty}
^{+\infty}\sqrt{\frac{c_0}{\pi}}e^{-c_0 \bigl(\lambda-
\xi \bigl(\frac{\log (t)}{t} \bigr) ^{\frac 1 {2N}}\bigr)^{2}} d\lambda\\
&= \chi(x) \int_s\int_\xi \int_{|\lambda|<c_1\bigl(\frac{t}{\log(t)}\bigr) ^{\frac 1 {2N}}}+\chi(x) \int_s\int_\xi
\int_{|\lambda|\geq c_1 \bigl(\frac{t}{\log(t)}\bigr) ^{\frac 1 {2N}}}\\
&= I_1(U_0) + I_2(U_0).
\end{split} 
\end{equation}

\subsubsection{Estimate of~$I_1$}
\label{se.4.1}

Following Section~\ref{se.3.3.1}, it is possible in the integral defining $I_1$, to deform the integration contour on the  new contour $\Gamma$ defined as
\begin{equation}
\Gamma=\left[0,D+\imath cD^{-N}\right]
\cup
\left[0,-D+\imath c D^{-N}\right]
\cup
\left\{ z=\rho+\imath c \rho^{-N}:|\rho|\geq D\right\}
= \Gamma_1 \cup \Gamma_2 \cup \Gamma_3.
\label{e5bis}
\end{equation}
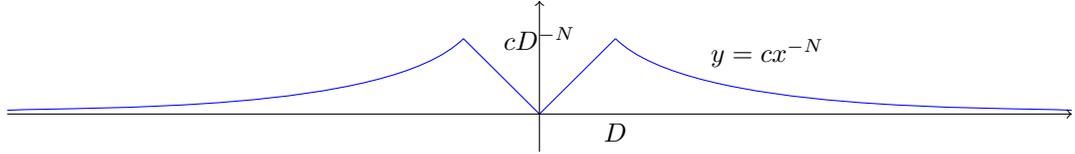
\begin{figure}[ht]

\begin{center}
\begin{tikzpicture}[scale=1]
\draw[blue](0, 0) -- (1, 1); 
\draw [blue](0, 0) -- (-1, 1) ;
\draw[blue] (1,1).. controls (2,0)and  (6,0.1) .. (7, 0.05);
\draw[blue] (-1,1).. controls (-2,0)and  (-6,0.1) .. (-7, 0.05);
\draw[->] (-7,0) -- (7,0);
\draw[->] (0,-0.5) -- (0,1.5);
\draw (1, 0) node[below] {$D$};
\draw (3,.5) node[above] {$y=cx^{-N}$};
\draw (0,1) node {$cD^{-N}$};
\end{tikzpicture}
\end{center}

\caption{ A third contour $\Gamma$.}\label{fig.3}
\end{figure}
Let us first estimate the contributions of $\Gamma _1 \cup \Gamma_2$.

 We have to be more careful here than in Section~\ref{se.3.3} because this analysis will give most of the time the leading term 
(and an $\mathcal{O}(t^{-\epsilon})$ estimate  is no more sufficient).

 We distinguish two cases according whether $d$ is odd or even.

\begin{itemize}

\item $d$ odd.

From~(\ref{e7}), the main contribution of the operator
$\chi(x) R(\xi)
e^{isP}\chi(x)$ for $ \xi \in \Gamma _1 \cup
\Gamma _2$ and $s\in [0,1]$ for any~$ t\geq 2$, 
is $B_{d-2}\xi^{\frac{d-2}{2}}$ with $B_{d-2}$ bounded.

If $\Gamma_1=\{re^{i\theta_1},\ r\in [0,a]\}$, we have
\begin{equation}
\label{5.2bis}
\Big\|\chi(x)\int_s \int_{\xi \in \Gamma_1} \Big\|
 \leq C \|U_0\|
 \int_0^a e^{-tr\sin\theta_1} r^{\frac{d-2}{2}}\ dr \leq \frac { C\|U_0\|}{ t^{\frac{d}{2}}},
\end{equation}
and the same estimate is true on $\Gamma_2$.

\item $d$ even.

From~(\ref{e8}), we can check that the main contribution from 

$$\chi(x) R(\xi)
e^{isP}\chi(x)\ \ \mbox{for} \xi \in \Gamma _1 \cup
\Gamma _2,\ s\in [0,1]\ \mbox{and any}\ t\geq 2,$$
is $B_{1,0}\xi^{\frac{d-2}{2}}\log \xi$ where  $B_{1,0}$ is a bounded operator.

By taking the parametrization $\Gamma_1=\{re^{i\theta_1},\ r\in [0,a]\}$ et $\Gamma_2=\{re^{i\theta_2},\ r\in [0,b]\}$, we get
\begin{equation}
\label{5.2ter}
\left\|\chi(x)\int_s \int_{\xi \in \Gamma_1\cup\Gamma_2} \right\|
=\left\|\chi(x)\int_s\left(\int_{\xi \in \Gamma_1}+ \int_{\xi \in \Gamma_2}\right)\right\|,
\end{equation}
and we write
\[
A:=\int_{\xi \in \Gamma_1}+ \int_{\xi \in \Gamma_2}
=\int_{\Gamma_1} e^{it\xi} \xi^{\frac{d-2}{2}}\log \xi\ d\xi+\int_{\Gamma_2} e^{it\xi} \xi^{\frac{d-2}{2}}\log \xi\ d\xi.
\]
Let $\xi=z^2$, so that $A$ becomes
\[
A:=2\int_{\widetilde\Gamma_1} e^{itz^2} z^{d-1}\log z\ dz+2\int_{\widetilde\Gamma_2}e^{itz^2} z^{d-1}\log z\ dz,
\]
where $\widetilde\Gamma_j$, $j=1,2$ are (after deformation)  segments in the half plane $\{\Ima z>0\}$.

Putting $\zeta=t^{1/2}z$, we get
\[
A:=\frac{2}{t^{d/2}}\int_{\widetilde\Gamma_{1,t}\cup\widetilde\Gamma_{2,t}} e^{i\zeta^2} \zeta^{d-1}\left(\log \zeta-\frac{1}{2}\ \log t\right)\ d\zeta
=:A_1+A_2.
\]
As the integrand in $A_2$ which corresponds to the term $\frac{1}{2} \log t$ is no more singular at $0$, it is possible to deform the contour 
$\widetilde\Gamma_{1,t}\cup\widetilde\Gamma_{2,t}$ into a small horizontal segment $\{s+i\beta,\ c_1<s<c_2\}$, with $\beta>0$
 which gives
\begin{equation}\label{5.5quar}
|A_2|=\left|\frac{\log t}{t^{d/2}}\int_{\widetilde\Gamma_{1,t}\cup\widetilde\Gamma_{2,t}} e^{i\zeta^2} \zeta^{d-1}\ d\zeta\right|
\leq C\ \frac{\log t}{t^{d/2}}e^{-2\beta t}.
\end{equation}
Similarly the  contribution of $\Gamma_1$ to $A_1$ is bounded by
\begin{equation}\label{5.5quint}
 C \|U_0\|
 \int_0^a e^{-tr\sin\theta_1} r^{\frac{d-2}{2}}\ |\log r|\ dr \leq \frac { C\|U_0\|}{ t^{\frac{d}{2}}},
\end{equation}
and the same is true on  $\Gamma_2$. Finally
\[
|A_1|
\leq\frac{C\|U_0\|}{t^{d/2}},
\]
which gives
\begin{equation}\label{5.5six}
\Big\|\chi(x)\int_s \int_{\xi \in \Gamma_1\cup\Gamma_2} \Big\|
  \leq \frac { C\|U_0\|}{ t^{\frac{d}{2}}}.
\end{equation}
It just remains to estimate the contribution of $\Gamma_3$. 

From~\eqref{e2bis}, we also have for any~$ t>1$
\begin{multline}
\label{5.3bis}
\left\|\chi(x)\int_s\int_{\xi \in \Gamma_3} \int_{|\lambda|<c_1\bigl(\frac{t}{\log(t)}\bigr) ^{\frac 1 {2N}}}\right\|\\
\leq  C \sqrt{c_0}\int_{\eta = -\infty}^{+\infty}\int_{|\lambda|<c_1\bigl(\frac{t}{\log(t)}\bigr) ^{\frac 1 {2N}}}e ^{-(t-1)c|\eta|^{-N}+N \log( |\eta|)-c_0\Bigl(\lambda-
\eta\bigl(\frac{\log(t)}{t}\bigr) ^{\frac 1 {2N}}\Bigr)^2}d\eta\ d\lambda\|U_0\|.
\end{multline}
Let now~$ c_2>0$ to be fixed later and let 
$$ \varphi= -(t-1)c|\eta|^{-N}+N \log( |\eta|)-c_0\left(\lambda-\eta\left(\frac{\log(t)}{t}\right) ^{\frac 1 {2N}}\right)^2. $$
Then if $|\eta|\leq c_{2} \left(\frac{ t}{ \log(t)}\right)^{\frac 1 N}$, we also have 
\begin{equation}
\label{5.5bis}
\varphi \leq - (t-1) c_2 ^{-\frac 1 N} \frac{\log (t)} t + N ( \log(t) - \log( c_2))\leq -(c_2 ^{-\frac 1 N}-(N+1))  \log(t).
\end{equation}
Let us choose~$ c_1 \in ]0, c_2[$.

 There exists~$ \delta >0$ such that for any~$|\lambda|<c_1\left( \frac{t}{\log(t)} \right)^{\frac 1 {2N}}$ and
 if~$|\eta|\leq c_{2} \left( \frac{t}{\log(t)} \right)^{\frac 1 {N}}$, then 
\begin{align}
\left(
\lambda - \eta
  \left(\frac{\log(t)}{t}\right) ^{\frac 1 {2N}}\right)^2 &\geq \delta \left(\lambda ^2 + \left(\eta \left(\frac{\log(t)}{t}\right) ^{\frac 1 {2N}}\right)^2\right),\\
\intertext{which implies}
\varphi &\leq N \log (\eta)-
  c_0 \delta \left(\lambda ^2 + \eta^2\left(\frac{ \log(t)} t \right)^{\frac 1 {N}}\right).
\end{align}
 We deduce that there exists ~$\varepsilon>0$ such that
\begin{equation}
\label{5.3ter}
\int_{|\eta|>c_{2} \bigl( \frac{t}{\log(t)} \bigr)^{\frac 1 {N}}}e ^{A|\eta|^{1/2}-
  c_0 \delta  \Bigl(\eta
  \bigl(\frac{\log(t)} {t} \bigr) ^{\frac 1{2N}}\Bigr)^2}= {\mathcal {O}}\Bigl(e ^ {-\epsilon \bigl(\frac{ t} { \log(t)} \bigr)^{\frac 1 {2N}}}\Bigr).
\end{equation}
Taking $c_2>0$ small enough, from~(\ref{5.2bis}),~(\ref{5.3bis}),~(\ref{5.5bis}),~(\ref{5.3ter}),~(\ref{5.5quar}),~(\ref{5.5quint}),~(\ref{5.5six}), and~(\ref{5.3ter}) we get 
\begin{equation}
\label{5.6bis}
\left\|\chi _1I_1\right\|\leq C t^{-d} \|U_0\|.
\end{equation}

\end{itemize}

\subsubsection{Estimate of~$I_2$ for $k>1$}\label{4.2.2}
Let
\begin{multline}
\label{5.21bis}
J(u) = \int_{s=0}^1\iint_{\genfrac{}{}{0pt}{}{\Ima \xi = 0^-}{|\lambda|>c_1\bigl(\frac{t}{\log(t)}\bigr) ^{1/ {2N}}}} 
\psi '\bigl(s\bigr)e^{i\bigl(u-s\bigr)\xi}\frac{1}{(1-i\xi)^k}\frac 1 {\xi+P}
V\bigl(s\bigr)\sqrt{\frac{c_0}{\pi}}e^{-c_0
  \Bigl(\lambda- \xi\bigl(\frac{\log(t)} t \bigr)^{\frac 1 {2N}}\Bigr)^{2}}.
\end{multline}
For~$t\geq 1$, we have~$J(t) = I_2 (t)$ and for any~$ u\in \mathbb{R}$
\begin{multline}
\label{5.20bis}
(\partial _u +iP)J(u) \\
= \int_{s=0}^1\iint_{\genfrac{}{}{0pt}{}{\Ima \xi = 0^-}{|\lambda|>c_1\bigl(\frac{t}{\log(t)}\bigr) ^{1/ {2N}}}}  
\psi '\bigl(s\bigr)\frac {ie^{i\bigl(u-s\bigr)\xi}}{\bigl(1-i\xi\bigr)^ k}V\bigl(s\bigr)\sqrt{\frac{c_0}{\pi}}e^{-c_0
 \Bigl(\lambda- \xi\bigl(\frac{\log(t)} t \bigr)^{\frac 1 {2N}}\Bigr)^{2}}:= K(u),
\end{multline}
which implies
 \begin{equation}
\label{5.17bis}
J(t) = e^{-itP} J(0) + \int_{0}^t e^{-i(t-s)P}K(s) ds.
\end{equation}
\par
To estimate the norm of~$ J\bigl(t\bigr)$ in~$ H$, we use the fact that 
$ e ^{-isP}$ is a contraction on $H$ for  $s\geq 0$, and we bound separately~$ K\bigl(u\bigr)$ for~$ u\geq 1$,~$ J\bigl(0\bigr)$ and~$
\int_{0}^ 1 \|K\bigl(u\bigr)\| du$.

 Since the r.h.s. in~\eqref{eq5.789} (the analog of $K(u)$) vanishes for  $u>1$ as well as the Cauchy data $U(0)$,
 we expect the main contribution to be coming from the third term.\par
For~$u \in [1,t]$, we have 
$$ K(u) = \int_{s=\frac 1 3 }^{\frac 2 3} 
\psi '\bigl(s\bigr)V\bigl(s\bigr) Q(u,s) ds,
$$
and
\begin{equation}\label{5.200}
 Q(u,s)= \iint_{\genfrac{}{}{0pt}{}{\Ima \xi = 0^-}{|\lambda|>c_1\bigl(\frac{t}{\log(t)}\bigr) ^{1/ {2N}}}} \frac {ie^{i\bigl(u-s\bigr)\xi}}{\bigl(1-i\xi\bigr)^ k}\sqrt{\frac{c_0}{\pi}}e^{-c_0
  \bigl(\lambda- \xi \bigl(\frac{\log(t)} t \bigr)^{\frac 1 {2N}}\bigr)^{2}}.
\end{equation}
    We  deform in~\eqref{5.200}  the contour in the ~$\xi$ variable into the contour 
  $$\Gamma= \left\{ \xi + i \epsilon \left(\frac{ t} {\log(t) } \right) ^{\frac 1 N}  ; \xi \in \mathbb{R}\right\}.  $$
  Using that $k>1$, we  bound the integral by 
  $$  e^{-(u-s)\epsilon \bigl(\frac{ t} {\log(t) } \bigr) ^{\frac 1 N}+ c_0\epsilon^2 \bigl(\frac{ t} {\log(t) } \bigr) ^{\frac 1 N} },$$
  which for $\epsilon>0$ small enough decays faster than any polynomial in $t$ (because for $u\geq 1, s \in \left[\frac 1 3, \frac 2 3\right]$, $u-s > \frac 1 3$).
 
So we get 
\begin{equation}
\label{5.7bis}
\|K(u)\| \leq C_m t^{-m}.
\end{equation}
\par
Now we bound~
\begin{multline}
\label{5.21ter}
{\displaystyle J(0) = \int_{s=0}^1\iint_{\genfrac{}{}{0pt}{}{\Ima \xi = 0^-}{|\lambda|>c_1\bigl(\frac{t}{\log(t)}\bigr) ^{1/ {2N}}}} 
\psi '\bigl(s\bigr)e^{i\bigl(u-s\bigr)\xi}\frac{1}{(1-i\xi)^k}\frac 1 {\xi+P}V\bigl(s\bigr)}\\
{\displaystyle \sqrt{\frac{c_0}{\pi}}e^{-c_0 \Bigl(\lambda- \xi\bigl(\frac{\log(t)} t \bigr)^{\frac 1 {2N}}\Bigr)^{2}} d\xi\ d\lambda\ ds.}
\end{multline}

Let us study for example  the contribution to~\eqref{5.21ter} of the region $ c_1\bigl(\frac{t}{\log(t)}\bigr) ^{\frac 1 {2N}}< \lambda$
 (the other contribution being similar).

 We shall now  deform the contour in the $\xi$ variable in~\eqref{5.21ter} into the contour
\begin{multline*}
\Gamma =  \left\{z = 1+\eta -i/2; \ \eta\leq 0\right\}
\cup 
\left[1-i/2,1-i\epsilon \left(\frac {t} {\log(t)}\right)^{\frac 1 N}\right]
\cup 
\left\{z = 1+\eta -i\epsilon \left(\frac {t} {\log(t)}\right)^{\frac 1 N}; \ \eta>0\right\}\\
= \Gamma^- \cup \Gamma_0\cup
\Gamma^+.
\end{multline*}

\begin{figure}[ht]
\begin{center}
\begin{tikzpicture}[scale=.5]
\draw[blue](-7, -1) -- ( 2, -1); 
\draw [blue](2, -1) -- (2, -5) ;
\draw[blue] (2,-5) -- (7, -5);
\draw[->] (-7,0) -- (7,0);
\draw[->] (0,-6) -- (0,1);
\draw (-0,-1.5) node[left] {$-\frac 1 2$};
\draw (1.9,.2) node[above] {$1$};
\draw (0,-5) node[left] {$-(\frac{t} {\log(t)} ) ^{1/N} $};
\end{tikzpicture}
\end{center}
\caption{A third contour $\Gamma$.}
\end{figure}
For~$\xi\in \Gamma^-\cup \Gamma_0$, we have for any~$ s\in
\left[\frac 1 3, \frac 2 3\right]$ and any~$ \lambda \in \left[c_1 \bigl(\frac{t}{\log(t)}\bigr) ^{1/ {2N}}, + \infty\right]$
\begin{multline}
\Big\|e^{-is \xi}\frac{V\bigl(s\bigr)}{\bigl(1-i
\xi\bigr)^ k}\frac 1 {\xi +P}\sqrt{\frac{c_0}{2\pi}}e^{-c_0
  \Bigl(\lambda- \xi\bigl(\frac{\log(t)} t \bigr)^{\frac 1 {2N}}\Bigr)^{2}}\Big\|\\
  \leq
\frac C{(1+|\xi|)^k}e^{-\delta(\lambda^2 +\Re(\xi)^2\bigl(\frac{\log(t)} t \bigr)^{\frac 1 {N}}} e^{c_0 \epsilon^2 \bigl(\frac{ t}{\log(t)}  \bigr)^{\frac 1 {N}}}\|U_0\| \ \text{($ \delta>0$)}.
\end{multline}
As a consequence, using again that $k>1$, we deduce that (for $\epsilon>0$ small enough) the contribution of~$\Gamma^-$ to~$ J(0)$ is bounded in norm by
\begin{equation}
\label{5.14bis}
e^{c_0 \epsilon^2 \bigl(\frac{ t}{\log(t)}  \bigr)^{\frac 1 {N}}} \int _{\lambda\geq c_1\log(t)} e^{-\delta
  \lambda ^2}\|U_0\|= {\mathcal {O}}\bigl(e^{-\gamma \bigl(\frac{t}{\log(t)}\bigr) ^{1/ {N}}}\bigr)\|U_0\|, \qquad \gamma >0.
\end{equation}
For~$\xi \in \Gamma^ +$ and $s\in [1/3, 2/3]$ we have
\begin{multline}
\Big\|e^{-is\xi}\frac{V\bigl(s\bigr)}{\bigl(1-i
\xi\bigr)^ k}\frac 1 {\xi +P}\sqrt{\frac{c_0}{\pi}}e^{-c_0
  \bigl(\lambda- \xi\bigl(\frac{\log(t)} t \bigr)^{\frac 1 {2N}}\bigr)^{2}}\Big\|\\
        \leq C e^{(c_0 \epsilon^2-\frac {\epsilon} 3) \bigl(\frac{t}{\log(t)}\bigr) ^{1/ {N}}}\frac
1{(1+|\eta|)^k}e^{-
c_0\bigl(\lambda -(1+ \eta) \bigl(\frac{\log(t)}{t}\bigr) ^{1/ {N}}\bigr)^2}\|U_0\|.
\end{multline}
Hence, using~$k>1$,  we see that the contribution of ~$\Gamma^+$ to~$ J(0)$ is again bounded in norm by
\begin{equation}
\label{5.15bis}
{\mathcal {O}}\left(e^{-\gamma \left(\frac{t}{\log(t)}\right) ^{1/ {N}}}\right)\|U_0\|.
\end{equation}
The contributions to ~$ J\bigl(0\bigr)$ of the region~$ \lambda < -c_1\left(\frac{t}{\log(t)}\right) ^{1/ {N}}$ are bounded similarly.
\par
Finally, it remains to bound
\begin{equation}
\label{5.12bis}
\int_{0}^1 \|K(u)\| du\leq \left(\int_{0}^1 \|K(u)\|^2 du\right)^{1/2}.
\end{equation}
From  Plancherel formula 
\begin{multline}
\label{5.9bis}
\begin{aligned}\int_{-\infty}^{+\infty} \|K(u)\|^2 du&= C
\int_{-\infty}^{+\infty}\Big\| {\frac i {\bigl(1-i
\xi\bigr)^k} \widehat{V\psi '}(\xi)\int_{|\lambda|\geq c_1\bigl(\frac{\log(t)}{t}\bigr) ^{1/ {N}}}e^{-c_0
  \bigl(\lambda- {\xi}\bigl(\frac{\log(t)}{t}\bigr) ^{1/ {2N}}\bigr)^{2}}d\lambda }\Big\|^2 d\xi\\
&= C\int_{-\infty}^{+\infty}\|H(\xi)\|^2 d \xi,
\end{aligned}
\end{multline}
with, for any~$|\xi| >\frac {c_1}{2} \bigl(\frac{t}{\log(t)}\bigr) ^{1/ {N}}$
\begin{equation}
\label{5.10bis}
\begin{aligned}
\|H(\xi)\|& =\left\| \int_{|\lambda|\geq c_1\log(t)}\frac
1{\left(1-i \xi\right)^ k}e^{-c_0
  \left(\lambda- {\xi}\left(\frac{\log(t)}{t}\right) ^{1/ {2N}}\right)^{2}} d\lambda \widehat{V\psi '}\left(\xi\right)
\right\|\\
& \leq  C \left(\frac{\log(t)}{t}\right) ^{k/ {N}}
  \|\widehat{V\psi '}(\xi)\|,
\end{aligned}
\end{equation}
and for~$|\xi| <\frac {c_1}{2} \bigl(\frac{t}{\log(t)}\bigr) ^{1/ {N}}$
 \begin{equation}
\label{5.11bis}
\|H(\xi)\| \leq \int _{|\lambda| > c_1\log(t)}e^{-\delta (\lambda^2 + {\xi}^2\bigl(\frac{\log(t)}{t}\bigr) ^{1/ {N}})}
 d\lambda \|\widehat{V\psi '}(\xi)\|\leq C e^{-\delta\bigl(\frac{t}{\log(t)}\bigr) ^{1/ {N}}}\|\widehat{V\psi '}(\xi)\|. 
\end{equation}
Hence, from~(\ref{5.12bis}),~(\ref{5.9bis})~(\ref{5.10bis}) and~(\ref{5.11bis}), using
\begin{equation}
\int_{-\infty}^{+\infty} \|\widehat {V\psi '}\bigl(\xi\bigr)\|^ 2 d\xi =
\int_{-\infty}^{+\infty} \|\psi ' V\bigl(s\bigr)\|^ 2 ds\leq C \int_0^1 |\psi'
\bigl(s\bigr)|^2 ds \|U_0\|^ 2,
\end{equation}
(let us recall that $V(s) = e^{is B}\chi(x) U_{0}\Rightarrow \| V(s)\|\leq
\|\chi(x) U_{0}\|\leq C\|U_{0}\|$), we deduce that
\begin{equation}
\label{5.18bis}
\int_{0}^1 \|K(u)\| du\leq C \left(\frac{\log(t)} t \right) ^{k/ {N}}\|U_0 \|.
\end{equation}
From~(\ref{5.17bis}),~(\ref{5.7bis})~(\ref{5.14bis}),~(\ref{5.15bis}) and~(\ref{5.18bis}), we get finally 
\begin{equation*}
\left\|I_2\right\|\leq C \left(\frac{\log(t)} t \right) ^{k/ {N}}\|U_0 \|.
\end{equation*}

\subsubsection{Estimate of~$I_2$ for $0<k<1$}\label{4.2.3}

In this section, we are going to recover the case $0<k\leq 1$ from the estimates for $k=0$ and $k=2$ and an interpolation argument.

 This interpolation argument cannot be performed on the final result, but we have to apply it on the operators $I_2$.

 We follow the $k$ dependence of the operators $I_1$ and $I_2$ explicitely and write $I_{1}^k, I_2^k$.

 When $k<1$, $I_2^k$ is only defined by the relation 
$$I_{2}^k (U_0) =  \chi(x)  (1+ iP )^{-k} e^{-itP}\chi(x)U_0 - I_1(U_0),$$
where we observe that the integral defining $I_1^k$ still makes sense.

 According to the estimates in Section~\ref{se.4.1}, we have 
 $$ \| I_{1}^q \|_{\mathcal{L}(L^2)} \leq \frac{C}{ t^{d/2}}, \qquad q=0, k,$$
 which implies that 
 $$ \| I_{2}^0 \|_{\mathcal{L}(L^2)} \leq \| I_{1}^0 \|_{\mathcal{L}(L^2)}+ \|\chi(x)  (1+ iP )^{-k} e^{-itP}\chi(x) \|_{\mathcal{L}(L^2)}\leq C.
 $$
 On the other hand, for $0<k\leq1$, $I_{1}^p$ is clearly the $\frac p 2$-interpolate between
 $I_{1}^0 $ and $I_{1}^2$ and $\chi(x)  (1+ iP )^{-p} e^{-itP}\chi(x)$ is the $\frac p 2$-interpolate between
 $\chi(x)  e^{-itP}\chi(x)$ and $\chi(x)  (1+ iP )^{-2} e^{-itP}\chi(x)$.

  As a consequence, we get 
 \begin{equation}
 \left\|I_2^k \right\|_{\mathcal{L}(L^2)} \leq C \left(\left(\frac{\log(t)} t \right) ^{2/ {N}}\right) ^{\frac p 2}.
 \label{Estiter}
\end{equation}
Collecting estimates (\ref{5.6bis}) and (\ref{Estiter}) gives (\ref{esti}) ends the proof of Theorem \ref{t4}.

\subsection{Proof of Theorem~\ref{t4} (case $- \frac 1 2 \leq N <0$)}
As previously we first focus on the case where assumption (E) holds. We start from a representation formula, for $\epsilon >0$ fixed. If 
$$ U(t) = e^{it P} \frac{ (1+ i P)^k } { (1- i \epsilon P)^{k+2}} \chi(x) U_0, $$
we have
\begin{equation}
\langle x \rangle ^{-d/2} U(t) =\chi(x)\int_{\Ima \xi = 0^-}d\xi
e^{it\xi}\frac{(1+i\xi)^k}{(1- i \epsilon \xi)^{k+2}}\frac 1 {\xi +P}\chi(x) U_0
\end{equation}
 We now deform the  integral above on the  new contour $\Gamma$, noticing that the deformation is licit due to the factor $\frac{(1+i\xi)^k}{(1- i \epsilon \xi)^{k+2}}$ and the boundedness of the resolvent in the domain between the real axis and the contour $\Gamma$. 
\begin{equation}
\Gamma=\left[0,D+\imath cD^{-N}\right]
\cup
\left[0,-D+\imath c D^{-N}\right]
\cup
\left\{ z=\rho+\imath c \rho^{-N}:|\rho|\geq D\right\}
= \Gamma_1 \cup \Gamma_2 \cup \Gamma_3.
\end{equation}
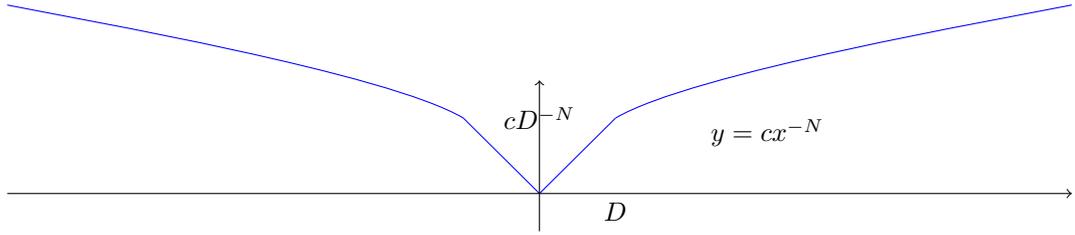
\begin{figure}[ht]

\begin{center}
\begin{tikzpicture}[scale=1]
\draw[blue](0, 0) -- (1, 1); 
\draw [blue](0, 0) -- (-1, 1) ;
\draw[blue] (1,1).. controls (2,1.6) and  (6,2.3) .. (7, 2.5);
\draw[blue] (-1,1).. controls (-2,1.6) and  (-6,2.3) .. (-7, 2.5);
\draw[->] (-7,0) -- (7,0);
\draw[->] (0,-0.5) -- (0,1.5);
\draw (1, 0) node[below] {$D$};
\draw (3,.5) node[above] {$y=cx^{-N}$};
\draw (0,1) node {$cD^{-N}$};
\end{tikzpicture}
\end{center}

\caption{ A fourth contour $\Gamma$ (recall that $- \frac 1 2 <N <0$).}
\label{fig.4}
\end{figure}
Our purpose is to get estimates uniform with respect to $\epsilon >0$ and pass to the limit $\epsilon \rightarrow 0$. 

The contributions of $\Gamma_1$ and $\Gamma_2$ are now estimated exactly as in the previous section, and we only have to
 estimate the contribution of $\Gamma_3$ uniformly with respect to $\epsilon >0$. 

 From~\eqref{e2bis}, we  have for any~$ t>0$ (with a constant $C$ independent on $\epsilon >0$)
\begin{multline}
\left\|\chi(x)\int_{\xi \in \Gamma_3} e^{it\xi}\frac{(1+i\xi)^k}{(1- i \epsilon \xi)^{k+2}}\frac 1 {\xi +P}\langle x \rangle ^{-d/2} U_0\right\|_{\mathcal{H}}\\
\leq  C \sqrt{c_0}\int_{|\eta| \geq D}e ^{-t c|\eta|^{-N}} \frac{ |\eta| ^ k} {1+ \epsilon |\eta|} d\eta \|U_0\| \leq C t^{-d/2} \| U_0\|_{\mathcal{H}}.
\end{multline}

Letting $\epsilon \rightarrow 0$ gives ~\eqref{estibis} under assumption (E).

\end{document}